\theoremstyle{plain}
    \newtheorem{theorem}{Theorem}[section]
    \newtheorem{lemma}[theorem]{Lemma}
    \newtheorem{corollary}[theorem]{Corollary}
    \newtheorem{proposition}[theorem]{Proposition}
\theoremstyle{definition}
    \newtheorem{definition}[theorem]{Definition}
    \newtheorem{example}[theorem]{Example}
     \newtheorem{remark}[theorem]{Remark}
\theoremstyle{remark}
\numberwithin{equation}{section}
\def\Id{\mathrm {Id}}
\def\R{{\mathbb R}}
\def\Z{{\mathbb Z}}
\def\C{{\mathbb C}}
\DeclareMathOperator{\Spin}{Spin}
\DeclareMathOperator{\Ind}{Ind}
\DeclareMathOperator{\ev}{ev}
\DeclareMathOperator{\Pt}{pt}
\newcommand{\geomG}{G}
\DeclareMathOperator{\odd}{odd}
\DeclareMathOperator{\Cliff}{{\C}liff}
\DeclareMathOperator{\rank}{rank}
\newcommand{\Times}{{\times}}
\begin{document}

\title{A Geometric Description of Equivariant K-Homology for Proper Actions}
\author{Paul Baum, Nigel Higson and Thomas Schick\thanks{NH and PB were
    partially supported by grants from the US National Science Foundation.  TS
    was partially supported by the Courant
    Research Center "Higher order structures in mathematics" via the German
    Initiative of Excellence}}
\date{}

\maketitle

\begin{abstract}
Let $G$ be a discrete group and let $X$ be a  $G$-finite,  proper $G$-$CW$-complex.  We prove that Kasparov's equivariant $K$-homology groups $KK^G_*(C_0(X),\C)$ are isomorphic to the  geometric equivariant $K$-homology groups of $X$ that are obtained by making the geometric $K$-homology theory of Baum and Douglas equivariant in the natural way.  This reconciles the original and current formulations of the Baum-Connes conjecture for discrete groups.

\begin{center}
\emph{Dedicated with admiration and affection to \\ Alain Connes on his 60th birthday.}
\end{center}
\end{abstract}

\section{Introduction}

In the original formulation of the Baum-Connes conjecture \cite{MR1769535},
the topological $K$-theory of a discrete group $G$ (the ``left-hand side'' of
the conjecture) was defined geometrically in terms of   proper $G$-manifolds.
Later, in \cite{MR1292018}, the definition   was changed so as to involve
Kasparov's equivariant $KK$-theory.  The change was made to accommodate new
examples beyond the realm of discrete groups, such as $p$-adic groups, for
which the geometric definition was not convenient or adequate. But it left
open the question of whether the original and revised definitions are
equivalent for discrete groups.  This is the question that we  shall
address in this paper.

In a recent article \cite{MR2330153}, we gave a complete proof    that the (non-equi\-variant) geometric $K$-homology theory of Baum and Douglas \cite{MR679698} agrees with Kasparov's $K$-homology on finite $CW$-complexes.  Here we shall show that our techniques   extend   to show that the original and revised definitions of   topological $K$-theory for a discrete group agree---provided that those techniques are supplemented by  a key result  of L\"uck and Oliver about equivariant vector bundles over  $G$-finite, proper  $G$-$CW$-complexes \cite{MR1838997}.

L\"uck and Oliver prove  that if $X$ is a $G$-finite, proper $G$-$CW$-complex, then there is a rich supply of equivariant vector bundles on $X$.  It follows that the Grothendieck group $K^0_G(X)$ of complex $G$-vector bundles  is the degree zero group of a $\Z / 2\Z$-graded cohomology theory on $X$, and it is essentially this   fact that we shall   need   to carry over the arguments of \cite{MR2330153} to the equivariant case.   

We shall show that the L\"uck-Oliver theorem  is equivalent to the assertion that the crossed product $C^*$-algebra  $C^*(X,G)$ associated to the action of $G$ on $X$ has an approximate identity consisting of projections.  As a result
 $K_G^0(X)$ is isomorphic to the $K_0$-group of the crossed product $C^*$-algebra. 
This has some further benefits for us---for example it makes it clear that each  complex $G$-vector bundles on a $G$-compact proper $G$-manifold has a unique smooth structure, up to isomorphism.

Returning to the Baum-Connes conjecture, the assertion that the old and the revised   versions are the same is a consequence of the assertion that the natural $G$-equivariant development of the Baum-Douglas $K$-homology theory, which we shall write as $K^G_*(X)$, is isomorphic to  Kasparov's   group $KK_*^G(C_0(X),\C)$  for any $G$-finite, proper $G$-$CW$-complex $X$.  There is a natural map 
\begin{equation*}
\label{nat-trans-eq}
\mu\colon 
K^{G}_* (X) \longrightarrow KK^{G}_*(C_0(X),\C) 
\end{equation*}
which is defined using the index of Dirac operators, and we shall prove that it is an isomorphism.   What makes this nontrivial is that the groups $K_*^G(X)$ do not obviously constitute a homology theory.  We shall  address this problem by introducing   groups $k_*^G (X)$ that manifestly \emph{do} constitute a homology theory  and by constructing a commuting  diagram 
$$
\xymatrix@C=50pt{
  k^{G}_* (X)\ar[r]\ar@{=}[d] &   K^{G}_* (X)\ar[d]^-{\mu}\\
 k^{G}_*(X) \ar[r]_-\mu&KK^{G}_*(C_0(X ),\C).}$$
We shall prove that the map from $k^G_*(X)$ to $KK^{G}_*(C_0(X ),\C)$  is an
isomorphism when $X$ is a $G$-finite, proper $G$-$CW$-complex and that map
from $k^G_*(X)$ to   $K^G_*(X)$ is   surjective.  This proves that the map
from $ k^{G}_*(X)$  to $KK^{G}_*(C_0(X ),\C)$ is an isomorphism.

A more general, but less concrete model for equivariant K-theory, based on correspondences, is given by
Emerson and Meyer in \cite{Emerson-Meyer}. They  describe in the end a bivariant
K-theory for spaces with proper groupoid actions.  Their theory does not always coincide with
KK-theory, but when their theory overlaps with ours the two coincide.

 In \cite{Baum-Oyono-Schick} it is shown that for compact
  Lie groups the result corresponding to the main result in this paper
  holds (a direct construction is given of an inverse to the
  transformation $\mu$). The basis of the theorem is again the  fact that for compact Lie group actions there are ``enough''
  equivariant vector bundles.

\section{Proper Actions}
\label{proper-sec}

Throughout the paper we shall work with a fixed a countable discrete group $G$.  By a \emph{$G$-space} we shall mean a topological space with an action of $G$ by homeomorphisms. We shall be concerned in the first place with  \emph{proper $G$-$CW$-complexes}. These are $G$-spaces  with  filtrations 
$$\emptyset = X^{-1}\subseteq X^0\subseteq X^1 \subseteq  \cdots \subseteq X$$
such that $X^k$ is obtained from $X^{k-1}$ by attaching equivariant cells of the form $D^k\times G/H$ along their boundaries, where $H$ is any finite subgroup of $G$. See \cite[Section~1]{MR2195456} for more details.

The Baum-Connes conjecture as formulated in \cite{MR1292018} involves \emph{universal proper $G$-spaces}.  In the context of proper   $G$-$CW$-complexes, these may be characterized as follows: 

\begin{theorem}[{\cite[Theorem~1.9]{MR2195456}}] 
\label{ebar-existence-thm}
There is a proper $G$-$CW$-complex $\underline E G$ with the property that if $Y$ is any proper $G$-$CW$-complex, then there is a  $G$-equivariant continuous  map  from $Y$ into $\underline E G$, and moreover this  map  is unique  up to equivariant homotopy.
\end{theorem}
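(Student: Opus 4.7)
My plan is to characterise $\underline E G$ by the property that the fixed-point sets $\underline E G^H$ are contractible for every finite subgroup $H\le G$ (and empty for every infinite subgroup), and then to separate the argument into an existence step and a universal-property step.

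For existence, I would take as a model
$$
\underline E G \;=\; \Bigl\{f\colon G \to [0,1] \;\Bigm|\; f \text{ has finite support and } {\textstyle\sum_{g\in G}} f(g) = 1 \Bigr\},
$$
with $G$ acting by translation. Topologised as the geometric realisation of the abstract simplicial complex whose vertex set is $G$ and whose simplices are the non-empty finite subsets of $G$, this is manifestly a $G$-$CW$-complex. The stabiliser of a point $f$ lies inside the (finite) symmetric group of the support of $f$, so the action is proper; and for any finite subgroup $H$, the fixed-point set $\underline E G^H$ is the intersection of the simplex with the linear subspace of $H$-invariant functions, hence non-empty (apply the averaging operator $|H|^{-1}\sum_{h\in H} h$) and convex, hence contractible.

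For the universal property, I would proceed by equivariant obstruction theory along the given filtration $Y^{-1}\subseteq Y^0\subseteq Y^1\subseteq\cdots$. An equivariant cell has the form $G/H\times D^k$ with $H$ finite, and by adjunction a $G$-equivariant extension across this cell of a given map on $G/H\times S^{k-1}$ corresponds to an ordinary extension of a map $S^{k-1}\to \underline E G^H$ over $D^k$. Since each $\underline E G^H$ is contractible, both the initial choice (for $k=0$) and every subsequent extension step are unobstructed; induction over cells produces the required $G$-map $Y\to \underline E G$. Uniqueness up to equivariant homotopy follows from exactly the same argument applied to the proper $G$-$CW$-complex $Y\times[0,1]$: two given equivariant maps $f_0,f_1\colon Y\to \underline E G$ define a $G$-map on the sub-complex $Y\times\{0,1\}$, which extends cell by cell over $Y\times[0,1]$ by the same contractibility input.

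The principal obstacle is producing a model of $\underline E G$ in which contractibility of every fixed-point set $\underline E G^H$ is transparent and which is genuinely a $G$-$CW$-complex, not merely a $G$-space with the homotopy type of one. The finite-support probability-measure construction above resolves both issues at once; once it is in hand, the universal property is a formal consequence of equivariant cellular obstruction theory.
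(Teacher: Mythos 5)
The paper offers no proof of this statement; it is imported verbatim from L\"uck's survey (the cited \cite[Theorem~1.9]{MR2195456}). So I will assess your argument on its own terms. Your overall strategy is the standard one, and the universal-property half is correct: by the adjunction $\Hom_G(G/H\times D^k,Z)\cong\Hom(D^k,Z^H)$, extending a $G$-map over an equivariant $k$-cell is a non-equivariant extension problem with target $\underline EG^H$, which is unobstructed since $\underline EG^H$ is contractible; running the same induction over $Y\times[0,1]$ relative to $Y\times\{0,1\}$ gives uniqueness up to equivariant homotopy.

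The gap is in the existence step, in the sentence claiming that the realisation of the full simplex on $G$ (finitely supported probability measures on $G$) is ``manifestly a $G$-$CW$-complex.'' It is not, as soon as $G$ has torsion. For a simplicial $G$-action to endow the realisation with a $G$-$CW$-structure one needs the action to be \emph{regular}: if $g$ carries a simplex to itself it must fix that simplex pointwise, so that each equivariant cell has the form $G/H\times D^n$ rather than $G\times_H\sigma$ with $H$ acting nontrivially on $\sigma$. In your model, a nontrivial finite subgroup $H\le G$ stabilises the simplex spanned by $H$ setwise but permutes its vertices, so regularity fails. The repair is routine --- pass to the barycentric subdivision of the full simplex on $G$; the induced $G$-action is regular, the underlying $G$-space and hence the fixed-point sets $\underline EG^H$ are unchanged, and properness (which for a $G$-$CW$-complex is equivalent to all isotropy groups being finite, the fact your stabiliser computation actually establishes) then holds. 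But as written the claimed model does not literally satisfy the definition, and the properness assertion is being invoked before the $G$-$CW$-structure that justifies it is in place; both should be routed through the barycentric subdivision.
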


Clearly the $G$-$CW$-complex $\underline E G$ is unique up to equivariant homotopy.   The universal space used in the formulation of the Baum-Connes conjecture is defined a bit differently (see \cite[Section~1]{MR1292018}),  but by the results of \cite[Section~2]{MR2195456} the same conjecture results if the above version of $\underline E G$ is used.  Compare also Theorem~\ref{ebar-numerable-thm} below.

 A proper $G$-$CW$-complex is said to be \emph{$G$-finite}
if only finitely many equivariant cells are used in its construction.  These are $G$-compact proper $G$-spaces  in the sense of the following definition.

\begin{definition}
 We shall say that a $G$-space $X$ is a \emph{$G$-compact, proper $G$-space} if 
\begin{enumerate}[\rm (a)]
\item $X$ is locally compact and Hausdorff.

\item The quotient space $X/G$ is compact and Hausdorff in the quotient topology.

\item Each point of $X$ is contained in an equivariant neighborhood $U$ that maps continuously and equivariantly onto some proper orbit space $G/H$ (where $H$ is a finite subgroup of $G$).
\end{enumerate}
\end{definition}

Apart from $G$-finite $G$-$CW$-complexes, we shall also be concerned with   smooth manifolds (with smooth actions of $G$) that satisfy these conditions. We shall call them \emph{$G$-compact proper $G$-manifolds}.  
 
The following result is a consequence of \cite[Theorem~3.7]{MR2195456} (the final statement reflects a simple feature of the $CW$-topology on $\underline E G$).

\begin{theorem}
\label{ebar-numerable-thm}
If $X$ is any $G$-finite proper $G$-space, then there is a $G$-equivariant map from $X$ to $\underline E G$.  It is unique up to equivariant homotopy, and its image is contained within a $G$-finite subcomplex of $\underline E G$.
 \end{theorem}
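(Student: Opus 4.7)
The plan is to reduce each of the three assertions in the theorem to the cited Theorem~3.7 of \cite{MR2195456} together with a standard finiteness argument for $CW$-complexes.

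First I would verify that a $G$-compact, proper $G$-space $X$ in the sense of the definition above falls within the class of proper $G$-spaces to which the cited theorem applies. Conditions (a)--(c) guarantee that the action of $G$ on $X$ is proper in the usual topological sense (isotropy groups are finite, and equivariant slices onto orbits $G/H$ exist through every point), while $G$-compactness provides the additional control required. Given this, the cited theorem directly yields a $G$-equivariant continuous map $f\colon X \to \underline E G$, and also its uniqueness up to $G$-equivariant homotopy: applying the same statement to the proper $G$-space $X \times [0,1]$ promotes any two such maps, agreeing on the boundary $X \times \{0,1\}$, to an equivariant homotopy connecting them.

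For the final assertion about $G$-finite subcomplexes I would argue as follows. Since $X/G$ is compact and $f$ descends to a continuous map $X/G \to \underline E G / G$, the image $f(X)/G$ is a compact subset of the quotient $\underline E G/G$. Because $\underline E G$ is a proper $G$-$CW$-complex, the quotient inherits a $CW$-structure whose cells are in bijection with the equivariant cells of $\underline E G$. A compact subset of a $CW$-complex meets only finitely many open cells and is therefore contained in a finite subcomplex; pulling back via the quotient map, $f(X)$ is contained in a $G$-finite subcomplex of $\underline E G$.

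The chief obstacle is really the bookkeeping needed to match the hypotheses of Theorem~3.7 of \cite{MR2195456} with the definition of $G$-compact proper $G$-space given above; once the translation is in hand, existence and uniqueness are immediate from the cited result. The $G$-finiteness claim is then essentially combinatorial, relying only on the standard fact that compact subsets of a $CW$-complex lie in finite subcomplexes together with the $G$-compactness of $X$.
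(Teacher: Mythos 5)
Your proposal takes essentially the same route as the paper, which simply cites Theorem~3.7 of \cite{MR2195456} and remarks that the $G$-finiteness of the image reflects ``a simple feature of the $CW$-topology on $\underline E G$.'' Your elaboration of that ``simple feature''---that $f(X)/G$ is compact in $\underline E G/G$, hence lies in a finite subcomplex of the ordinary $CW$-complex $\underline E G/G$, and pulling back gives a $G$-finite subcomplex of $\underline E G$---is exactly the argument the authors have in mind, and your device of applying the cited existence statement to $X\times[0,1]$ to obtain uniqueness is a standard, correct way to extract the homotopy-uniqueness clause.
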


\section{Equivariant Geometric K-Homology}
\label{geometric-sec}

In this section we shall present the equivariant version of the geometric $K$-hom\-ology theory of Baum and Douglas \cite{MR679698}.   The definition presents no difficulties,   so we shall be brief.  The reader is referred to \cite{MR679698} or \cite{MR2330153} for treatments of the non-equivariant theory.  

We shall work with principal bundles, rather than with spinor bundles as in \cite{MR679698} or \cite{MR2330153}.
To fix notation, recall  the following rudimentary facts about Clifford algebras, $\Spin^c$-groups and $\Spin^c$-structures.  Denote by $\Cliff (n)$ the $\Z/2\Z$-graded complex $*$-algebra generated by skew-adjoint degree-one  elements $e_1,\dots, e_n$ such that  $$e_ie _j + e_j e _i= -2\delta_{ij} I.$$
  We shall consider $\R^n$ as embedded into $\Cliff (n)$ in such a way that the standard basis of $\R^n$ is carried to $e_1,\dots , e_n$.

Denote by  $\Spin^c(n)$   the group of all even-grading-degree unitary
elements in $ \Cliff (n)$ that map $\R^n$ into itself under the adjoint
action.    This is a compact Lie group.  The image of the group homomorphism 
$$\alpha\colon \Spin^c(n)\longrightarrow GL(n,\R)$$ 
given by the adjoint action is  $SO(n)$  and the  kernel is the circle group $U(1)$ of all unitaries in $\Cliff (n)$ that are multiples of the identity element. 

There is a natural complex conjugation operation on $\Cliff (n)$ (since the relations defining the Clifford algebra involve only real coefficients) and the map $u\mapsto u\bar u^*$ is a homomorphism from  $\Spin^c(n)$ onto $U(1)$.  The combined homomorphism 
$$\Spin^c(n)\longrightarrow SO(n)\times U(1)$$
is a double covering.

Let $M$ be a smooth, proper $G$-manifold and let $V$ be a smooth, real $G$-vector bundle over $M$ of rank $n$.  A \emph{$G$-$\Spin^c$-structure} on $V$ is a homotopy class of  reductions of the principal  frame bundle  of $V$ (viewed as a $G$-equivariant right principal $GL(n,\R)$-bundle)  to a $G$-equivariant principal $\Spin^c(n)$-bundle.  In other words it is a $G$-homotopy class of commuting diagrams 
$$
\xymatrix{
Q\ar[r]^\varphi \ar[d]& P \ar[d]\\
M\ar@{=}[r]& M
}
$$
of smooth $G$-manifolds,
where $P$ is the  bundle of ordered bases for the fibers of $V$, $Q$ is a $G$-equivariant principal $\Spin^c(n)$-bundle, and $$\varphi (q u) = \varphi(q)\alpha(u)$$ for every $q\in Q$ and every $u\in \Spin^c(n)$.  A $G$-$\Spin^c$-structure on $V$ determines a $G$-invariant orientation, and a specific choice of   $Q$ within its   homotopy class determines a Euclidean structure.

\begin{example} Every $G$-equivariant complex vector bundle carries a natural $\Spin^c$-structure because there is a (unique) group homomorphism
$ 
U(k)\to  \Spin^c(2k)$ that lifts the map
$$
U(k)\longrightarrow SO(2k)\times U(1)
$$
given  in the right-hand factor by the determinant.
\end{example}

  A  \emph{$G$-$\Spin^c$-vector bundle} is a smooth real $G$-vector bundle with a given $G$-$\Spin^c$-structure.   The direct sum of two $G$-$\Spin^c$-vector bundles carries a natural $G$-$\Spin^c$-struc\-ture.  It is obtained from the diagram 
$$
\xymatrix{ \Spin^c(m)\times \Spin^c(n)\ar[d]\ar[r] & \Spin^c (m+n) \ar[d]\\
GL(n,\R)\times GL(m,\R)\ar[r] & GL(m+n,\R)
}
$$
that is in turn obtained from the inclusions of $\Cliff (m)$ and $\Cliff (n)$ into $\Cliff (m+n)$   given by the formulas $e_k\mapsto e_k$ and $e_k\mapsto e_{m+k}$, respectively.   In addition, if $V$ and $V\oplus W$ carry $G-\Spin^c$-structures, then there is a unique $G-\Spin^c$-structure on $W$ whose direct sum, as above, with the $G-\Spin^c$-structure on $V$ is the given $G-\Spin^c$-structure on the direct sum.  This is the \emph{two out of three principle} for $\Spin^c$-structures.

If  $M$ is a smooth $G$-manifold, then a \emph{$G$-$\Spin^c$-structure} on $M$ is a $G$-$\Spin^c$-structure on its tangent bundle, and a \emph{$G$-$\Spin^c$-manifold} is a smooth $G$-manifold together with a given $G$-$\Spin^c$-structure.

\begin{definition}
Let $X$ be any $G$-space.
An \emph{equivariant $K$-cycle}  for $X$ is a triple $(M,E,f)$ consisting of:
\begin{enumerate}[\rm (a)]
\item A $G$-compact, proper  $G$-$\Spin^c$-manifold $M$ without boundary.\footnote{The manifold $M$ need not be connected.  Moreover different connected components of $M$ may have different dimensions.}
\item 
A smooth    complex $G$-vector bundle $E$ over  $M$.
\item A continuous and  $G$-equivariant  map $f\colon M \to X$. 
\end{enumerate}
\end{definition}

The geometric equivariant $K$-homology groups $K^G_*(X)$ will be obtained by placing a certain equivalence relation on  the class of all equivariant $K$-cycles.  Before describing it, we give constructions at the level of cycles that will give the arithmetic structure of the groups $K_*^G(X)$.

 If $(M,E,f)$ and $(M',E',f')$ are two equivariant $K$-cycles for $X$, then their \emph{disjoint union} is the equivariant $K$-cycle   $(M\sqcup M', E\sqcup E', f\sqcup f')$.    The operation of disjoint union will give addition.

 Let $V$ be a $G$-$\Spin^c$-vector bundle with a $G$-$\Spin^c$-structure $\varphi\colon Q \to P$.   Fix an orientation-reversing isometry of $\R^n$. Since it preserves the inner product, $\tau$ induces an automorphism of $\Cliff(n)$, and hence of $\Spin^c(n)$, that we shall also denote by $\tau$.  Consider the map $\varphi_\tau\colon Q_\tau \to P$, where:
 \begin{enumerate}[\rm (a)]

 \item    $Q_\tau$ is equal to $Q$ as a $G$-manifold, but has the twisted action 
 $
 q\cdot_\tau u = q\cdot \tau (u)$  of the group $\Spin^c(n)$.
 \item  {$\phi_\tau (q) = \phi(q)\tau$}.
 \end{enumerate}
It  defines the \emph{opposite} $G$-$\Spin^c$-vector bundle $-V$.
 Applying this to manifolds, we define 
the  \emph{opposite} of an equivariant $K$-cycle $(M,E,f)$ to be the equivariant $K$-cycle $(-M, E, f)$.  This will give the operation of additive inverse in the geometric groups $K^G_*(X)$.

If $M$ is a $\Spin^c$-$G$-manifold, then its  boundary $\partial M$ inherits a $\Spin^c$-$G$-structure.  This is obtained  from the pullback diagram 
$$
\xymatrix{ \Spin^c(n{-}1)\ar[d]\ar[r]&  \Spin^c(n)\ar[d]\\
GL(n{-}1,\R)\ar[r] & GL(n,\R)
}
$$
associated to the lower-right-corner embedding of $GL(n{-}1,\R)$ into $GL(n,\R)$ and the inclusion of $\Cliff(n{-}1)$ into $\Cliff(n)$ that maps the generators $e_k$ to $e_{k+1}$.    Using the outward-pointing normal first convention, the bundle of  frames  for  $T\partial M$ maps to  the  restriction to the boundary of the bundle of  frames  for $TM$.   A pullback construction gives the required reduction to $\Spin^c(n{-}1)$.

\begin{definition}
\label{boundary-spinc-def}
An equivariant $K$-cycle for $X$ is a \emph{boundary} if there is a $G$-compact, proper $G$-$\Spin^c$-manifold $W$ with boundary,  a smooth, Hermitian equivariant vector 
bundle $E$ over $W$ and a continuous equivariant map $f \colon W  \to
X$ such that the 
given cycle is isomorphic to $(\partial W, E\vert_{\partial W}, f\vert_{\partial W})$. 
Two equivariant $K$-cycles for $X$, $(M_1,E_1,f_1)$ and  $(M_2,E_2,f_2)$ are \emph{bordant} if the disjoint union of one with the opposite of the other is a boundary.
\end{definition}

The most subtle aspect of the equivalence relation on equivariant $K$-cycles that defines geometric $K$-homology involves certain sphere bundles over $\Spin^c$-manifolds.  To describe it we begin by considering a single sphere. 

View $S^{n-1} $ as the boundary of the unit ball in $\R^{n}$.  The frame bundle for   $\R^{n}$ can of course be identified with $\R^{n}\times GL(n,\R)$ since the columns of any invertible matrix constitute a frame for $\R^{n}$.  We can therefore equip $\R^{n}$ with the \emph{trivial} $\Spin^c$-structure $\R^{n}\times \Spin^c(n)$.  

According to the prescription given prior to Definition~\ref{boundary-spinc-def}, the associated $\Spin^c$-structure on the  sphere $S^{n{-}1}$ is given by the right  principal  $\Spin^c(n{-}1)$-bundle $Q$ whose fiber   at $v\in S^{n-1}$ is the space of all elements $u\in \Spin^c(n)$ whose image in $SO(n)$ is a matrix with first column equal to $v$.  Observe that $Q$ is  $\Spin^c(n)$-equivariant   for the  left  action of $\Spin^c(n)$ on the sphere given by the projection to $SO(n)$.

Let us now assume that $n=2k+1$.  We are going to fix a certain $\Spin^c(n)$-equivariant complex vector bundle $F$ on $S^{2k}$.  The key property of $F$ is that the $\Spin^c(n)$-equivariant index of the Dirac operator  (discussed in the next section) coupled to $F$ is equal to the rank-one trivial representation of $\Spin^c(n)$.  An explicit calculation, given in \cite{MR2330153}, shows that the dual of the positive part of the spinor bundle for $S^{n-1}$ has the required property.  It follows easily from the Bott periodicity theorem that $F$ is essentially unique (up to addition of trivial bundles, any two $F$ are isomorphic).  For what follows, any choice of $F$ will do.  The bundle $F$ and  the trivial line bundle together generate $K(S^{2k})$, and for that reason we shall call it the \emph{Bott generator}.

Following these preliminaries, we can describe the ``vector bundle modification'' step in the equivalence relation defining geometric $K$-homology. 

Let  $V$  be a   $G$-$\Spin^c$-vector bundle of rank $2k$ over a $G$-$\Spin^c$-manifold $M$ and denote by $\widehat M$ the sphere bundle\footnote{Strictly speaking, to form the sphere bundle we need a metric on $V$ and so a specific choice of  principal  bundle $Q$ within its homotopy class.  Of course, any two sphere bundles will be bordant.}   of the direct sum vector bundle $\R\oplus V$.
The manifold $\widehat M$ may be described as the fiber bundle
$$
\widehat M = Q\times _{\Spin^c(2k+1)} S^{2k} ,
$$
where $Q$ is the principal $G$-$\Spin^c(2k{+}1)$-bundle associated to $\R\oplus V$.
Its tangent bundle is isomorphic to the pullback of the tangent bundle of $M$, direct sum the fiberwise tangent bundle
$Q\times _{\Spin^c(2k+1)} TS^{2k} $.  Both carry natural $G$-$\Spin^c$-structures, and so $\widehat M$ is a $G$-$\Spin^c$-manifold.

Form  the $G$-equivariant complex vector bundle 
$$
  Q \times _{\Spin^c(2k+1)} F ,
$$
from the Bott generator discussed above.  We shall use the same symbol $F$ for this bundle over $\widehat M$.

 \begin{definition}
Let $(M,E,f)$ be  an equivariant $K$-cycle  and let $V$ be a rank $2k$
$G$-$\Spin^c$-vector bundle over $M$.   The  \emph{modification} of $(M,E,f)$
associated to $V$ is the equivariant  $K$-cycle 
$$ (M,E,f)\hat{\phantom{tt}} = \bigl(\widehat M,  F \otimes \pi^*(E) , f\circ \pi\bigr),$$ where:
\begin{enumerate}[\rm (a)]
\item  $\widehat M$ is the total space of the sphere bundle of   $\R \oplus V$, equipped with the $G$-$\Spin^c$-structure described above;  
\item $\pi$ is the projection from $ \widehat M $ onto $M$; and 
\item  $F$ is the $G$-equivariant complex vector bundle on $\widehat M$   described above.
\end{enumerate}
\end{definition}

We are now ready to define the  geometric equivariant $K$-homology groups.

\begin{definition}
\label{k-geom-def}
Denote by $K^{G}(X)$ the set of equivalence classes of equivariant $K$-cycles over $X$, for the equivalence relation generated by the following three elementary relations:
\begin{enumerate}[\rm (a)]
\item If $(M,E_1,f)$ and $(M,E_2,f)$ are two equivariant  $K$-cycles with the same proper, $G$-compact $G$-$\Spin^c$-manifold $M$ and same map $f\colon M\to X$, then 
$$ (M\sqcup M ,E_1\sqcup E_2 ,f\sqcup f ) \sim  (M, E_1\oplus E_2, f).
$$
\item 
If $(M_1,E_1,f_1)$ and $(M_2,E_2,f_2)$ are bordant equivariant $K$-cycles then 
$$(M_1,E_1,f_1) \sim (M_2,E_2,f_2).$$
\item If $(M,E,f)$ is an equivariant  $K$-cycle, if $V$ is an even-rank $G$-$\Spin^c$-vector bundle over $M$,  and if $(M,E,f)\hat{\phantom{tt}}$ is the modification of $(M,E,f)$ associated to $V$,   then     
$$ 
(M,E,f)\sim  (M,E,f)\hat{\phantom{tt}}.$$ 
\end{enumerate}
\end{definition}

The set $K^{\geomG}(X)$ is  an abelian group with addition given by disjoint union.  Denote by $K^{\geomG}_{\ev}(X)$ and $K^{\geomG }_{\odd}(X)$ the subgroups of $K^{\geomG}(X)$ composed of equivalence classes of equivariant $K$-cycles $(M,E,f)$ for which every connected component of $M$ is even-dimensional or odd-dimensional, respectively.  Then $K^{\geomG}(X) \cong K^{\geomG}_{\ev}(X)\oplus K^{\geomG }_{\odd}(X)$.

\section{Equivariant Kasparov Theory}
\label{kk-review-sec}

In this section we shall define a natural transformation from   geometric equivariant $K$-homology to Kasparov's equivariant $K$-homology.  Once again, this  is a straightforward extension  to the equivariant context of the Baum-Douglas theory that was reviewed in detail   already  in the paper \cite{MR2330153}.  Therefore we shall be brief.

Fix a \emph{second countable} $G$-compact proper $G$-space $X$, for example a $G$-finite proper $G$-$CW$-complex.   The second countability assumption is made for consistency with Kasparov's theory, which applies to second countable locally compact spaces, or separable $C^*$-algebras.

We shall denote by  $KK_n^G(C_0(X),\C)$ the Kasparov group 
$KK^G \bigl (C_0(X), \Cliff(n)\bigr)$ (the action of $G$ on $\Cliff(n)$ is trivial).  See \cite[Section 2]{MR918241}. There are canonical isomorphisms 
$$
KK_n^G(C_0(X),\C) \cong KK_{n+2}^G(C_0(X),\C)
$$
coming from the periodicity of Clifford algebras. Compare  \cite{MR2330153}.  As a result we may form the $2$-periodic groups $KK_{\ev/\odd}^G(C_0(X),\C)$.

The natural transformation 
$$
\mu\colon K^{G}_{\ev/\odd} (X)\longrightarrow 
KK^G_{\ev/\odd} (C_0(X),\C)
$$
 into Kasparov theory is defined by associating to an equivariant $K$-cycle $(M,E,f)$ a Dirac operator, and then constructing from the Dirac operator  a cycle for Kasparov's analytic $K$-homology group.

The vector space $\Cliff(n)$ carries a natural inner product in which the monomials $e_{i_1}\cdots e_{i_k}$ form an orthonormal basis.  If $M$ is a $G$-compact, proper $G$-$\Spin^c$-manifol, and if $Q$ is a   lifting to $\Spin^c(n)$  of the frame bundle of $M$, then the  $\Z/2\Z$-graded Hermitian vector bundle 
$$ S = Q\times _{\Spin(n)} \Cliff (n)  $$
that is formed using the left multiplication action of $\Spin^c(n)$ on $\Cliff (n)$ carries a right action of the algebra $\Cliff(n)$ and a commuting left action of $TM$ as odd-graded skew-adjoint endomorphisms such that $v^2 = -\|v\|^2 I$.  This is called the action of $TM$ on the \emph{spinor bundle} $S$ by \emph{Clifford multiplication}.  

\begin{remark}
There are other versions of the spinor bundle that do not carry a right Clifford algebra action.  The bundle used here has the advantage of allowing a uniform treatment of both even and odd-rank bundles $V$.  In addition the real case may be treated similarly (although we shall not consider it in this paper).
\end{remark}

 \begin{definition}
\label{dirac-op-def}
Let $M$ be a  $G$-compact proper $G$-$\Spin^c$-manifold. Fix an associated principal $\Spin^c$-bundle over $M$, and let $S$ be the spinor bundle, as above.  Let $E$ be a smooth, Hermitian $G$-vector bundle over $M$. 
We shall call an  odd-graded, symmetric, first-order  {$G$-equivariant} linear partial differential operator $D$ acting on the sections of  $S\otimes E$ a \emph{Dirac operator} if it  commutes with the right Clifford algebra action on the spinor bundle and if 
$$
[ D, f] u = \operatorname{grad} (f)\cdot u,
$$
for every smooth function $f$ on $M$ and every section $u$ of $S\otimes E$, where $ \operatorname{grad} (f)\cdot u$ denotes Clifford multiplication on $S$ by the gradient of $f$.  \end{definition}

Dirac operators in this sense always exist, and basic   PDE theory gives the following result: 

\begin{proposition} 
\label{elliptic-prop}
The Dirac operator $D$, considered as an unbounded operator  on $L^2(M, S\otimes E)$   with domain the smooth compactly supported sections  is essentially self-adjoint.
 The bounded {$G$-equivariant} Hilbert space operator 
 $F = D (I+D^2)^{-1/2}$ 
 commutes, modulo compact operators, with multiplication operators from $C_0(M)$.  Moreover the product of $I-F^2$ with any multiplication operator from $C_0(M)$ is a compact operator. \qed
\end{proposition}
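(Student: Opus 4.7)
The plan is to combine Chernoff's essential self-adjointness theorem with finite propagation speed arguments, exploiting $G$-compactness of $M$ to supply the required completeness and bounded geometry.

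First, I fix a $G$-invariant Riemannian metric on $M$ together with $G$-invariant Hermitian metrics on $S$ and $E$. These exist because the action is proper: near each orbit, $M$ looks like $G\times_H U$ with $H$ finite, invariant metrics exist locally, and they patch via a $G$-invariant partition of unity on the compact orbit space $M/G$. Since $M/G$ is compact and the action is proper, the resulting Riemannian metric is complete (geodesics extend indefinitely because their images in $M/G$ stay in a compact set, and local extensibility is automatic from the slice structure near each orbit), and $M$ has bounded geometry: injectivity radius and curvature are $G$-invariant, so they descend to continuous functions on the compact space $M/G$.

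Second, I invoke Chernoff's essential self-adjointness theorem: on a complete Riemannian manifold, a first-order symmetric elliptic operator whose principal symbol has pointwise bounded norm is essentially self-adjoint on $C_c^\infty$-sections. The Dirac operator $D$ satisfies this hypothesis, its principal symbol being Clifford multiplication by covectors. With essential self-adjointness in hand, $F = D(I+D^2)^{-1/2}$ is defined via the bounded Borel functional calculus and is a bounded, self-adjoint, $G$-equivariant operator.

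Third, I verify the compactness statements for $\phi \in C_0(M)$. By norm-density in the multiplier norm it suffices to treat $\phi \in C_c^\infty(M)$. For $(I - F^2)\phi = (I+D^2)^{-1}\phi$, represent the resolvent as a wave-equation integral
\[
(I+D^2)^{-1} = \int_{\R} \widehat{h}(s)\, \cos(sD)\, ds
\]
with $h$ a Schwartz function, approximate $\widehat{h}$ in $L^1(\R)$ by compactly supported functions $k_\varepsilon$, and apply finite propagation speed of $\cos(sD)$: the operator $\int k_\varepsilon(s)\cos(sD)\phi\, ds$ takes values in $H^2$-sections supported in a fixed compact subset of $M$, so is compact by the Rellich lemma. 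Taking $\varepsilon\to 0$ gives compactness of $(I+D^2)^{-1}\phi$. For $[F,\phi]$, use the Cauchy representation
\[
F = \frac{2}{\pi}\int_{0}^{\infty} D(I+\lambda^2+D^2)^{-1}\, d\lambda
\]
and the resolvent commutator identity, noting that $[D,\phi]$ is multiplication by a compactly supported smooth endomorphism; this expresses $[F,\phi]$ as a norm-convergent $\lambda$-integral of operators built from resolvents sandwiched around compactly supported multiplications, each of which is compact by the previous paragraph.

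The principal obstacle is that $M$ is in general non-compact, so standard elliptic theory for compact manifolds is not directly applicable. What makes the argument succeed is the combination of $G$-compactness with properness of the action: these supply the completeness and bounded geometry that are exactly what is required to apply Chernoff's theorem and the finite propagation speed compactness estimates.
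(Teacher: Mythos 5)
Your argument is correct and follows the standard route that the paper implicitly defers to: the paper records this proposition as a consequence of ``basic PDE theory'' without proof, and the relevant technique (Chernoff's essential self-adjointness theorem on a complete manifold, followed by finite-propagation-speed estimates and the Rellich lemma) is exactly what is found in the source the authors cite elsewhere for Dirac-operator analysis on non-compact manifolds, namely \cite[Ch.~10]{MR2002c:58036}. The only cosmetic point worth fixing is that the approximants $k_\varepsilon$ should be taken smooth and compactly supported (not merely continuous) so that their inverse Fourier transforms decay rapidly enough to place $g_\varepsilon(D)\phi$ into $\operatorname{Dom}(D^2)$, which is what licenses the $H^2$-and-Rellich step.
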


Now the Hilbert space $L^2(M, S\otimes E)$ carries a right action of $\Cliff(n)$ that commutes with $D$ and the action of $C_0(M)$.  It also carries a unique $\Cliff(n)$-valued inner product $\langle\,\, , \,\, \rangle_{\Cliff}$ such that 
 $$\langle s_1,s_2\rangle  =  \tau \bigl ( \langle s_1,s_2\rangle_{\Cliff} \bigr )
 $$
 where on the left is the $L^2$-inner product, and on the right is the state $\tau$ on $\Cliff(n)$ that maps all nontrivial monomials $e_{i_1}\cdots e_{i_p}$ to zero.  Using it we place a Hilbert $\Cliff(n)$-module structure on $L^2 (M,S\otimes E)$.
 
Proposition~\ref{elliptic-prop} implies that the operator $F= D (I+D^2)^{-1/2}$, viewed as an operator on the Hilbert $\Cliff(n)$-module $L^2(M,S\otimes E)$, yields a cycle for Kasparov's equivariant $KK$-group $KK^G(C_0(M),\Cliff(n))$ (see \cite[Definition 2.2]{MR918241}).

\begin{definition} We shall denote by $[M,E]\in KK_n^G(C_0(M),\C)$ the $KK$-class of the operator  $ F= D (I+D^2)^{-1/2}$.  \end{definition} 
 
 The first main theorem concerning the classes $[M,E]$ is as follows:

\begin{theorem} 
\label{mu-well-defined}
The correspondence that associates to each  equivariant $K$-cycle $(M,E,f)$ the $KK$-class
$$ f_* [M,E]\in KK_{\ev/\odd} ^G (C_0(X),\C)$$ gives a well-defined homomorphism $$
\mu\colon K^G _{\ev/\odd}(X)\longrightarrow KK_{\ev/\odd}^G (C_0(X),\C) .
$$
\end{theorem}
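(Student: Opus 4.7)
The plan is to reduce the statement to three separate checks, following the non-equivariant argument of the authors' earlier paper \cite{MR2330153} and equivariantizing each step. First I would verify that the class $[M,E]\in KK_n^G(C_0(M),\C)$ is well-defined independently of the choices in Definition~\ref{dirac-op-def}: two $G$-invariant choices of metric, connection, or representative $Q$ of the $G$-$\Spin^c$-structure within its homotopy class produce Dirac operators whose bounded transforms $D(I+D^2)^{-1/2}$ differ by a $G$-equivariant operator that is compact relative to $C_0(M)$, and the corresponding Kasparov cycles are operator-homotopic in $KK^G(C_0(M),\Cliff(n))$. Functoriality in $f$ is standard Kasparov formalism. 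The homomorphism property is then immediate because the Dirac operator on a disjoint union $M_1\sqcup M_2$ with coefficient bundle $E_1\sqcup E_2$ is the orthogonal direct sum of the individual operators; the same observation disposes of relation (a) of Definition~\ref{k-geom-def}, and the opposite $G$-$\Spin^c$-structure reverses the grading of the spinor bundle and hence negates the $KK$-class, so that the bordism relation makes sense.

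Next I would address bordism invariance. Given a $G$-compact proper $G$-$\Spin^c$-manifold $W$ with boundary $M$, a smooth Hermitian $G$-vector bundle $E_W$ and an equivariant map $f_W\colon W\to X$, I would attach a $G$-invariant cylindrical end to $W$ to obtain a complete proper $G$-$\Spin^c$-manifold $\widetilde W$ carrying a product-type Dirac operator near the boundary. The argument of \cite{MR2330153}, which builds an operator homotopy killing the boundary Dirac class by using the Dirac operator on $\widetilde W$ as an explicit null-cobordism cycle, then transports to the equivariant setting essentially verbatim: $G$-invariant cut-off functions, metrics, and connections are available on proper $G$-manifolds, and the local elliptic estimates underlying Proposition~\ref{elliptic-prop} are unaffected by the $G$-action. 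The only substantive adjustment is to work throughout with the $G$-equivariant compact operators in the sense of \cite{MR918241}.

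The most delicate step, and the one I expect to be the main obstacle, is vector bundle modification. Given an even-rank $G$-$\Spin^c$-vector bundle $V$ over $M$ with associated sphere bundle $\widehat M = Q\times_{\Spin^c(2k+1)} S^{2k}$ and Bott bundle $F$, the plan is to factor the Dirac operator on $\widehat M$ with coefficients in $F\otimes \pi^*E$ as a Kasparov product of the family of vertical Dirac operators on the fibers of $\pi$ twisted by $F$ with the Dirac operator on $M$ twisted by $E$. The defining property of the Bott generator---that the $\Spin^c(2k+1)$-equivariant index of the Dirac operator on $S^{2k}$ twisted by $F$ is the trivial one-dimensional representation---translates, after the associated-bundle construction, into the assertion that the vertical family represents the identity element of $KK^G(C_0(M),C_0(M))$. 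The Kasparov product then returns $[M,E]\in KK_n^G(C_0(M),\C)$, so that $\mu$ sends the modified cycle to the same class as the original. Verifying this factorization equivariantly, and upgrading the Bott generator computation from \cite{MR2330153} so that it takes place in $\Spin^c(2k+1)$-equivariant $KK$-theory before descending to the proper $G$-action via the associated-bundle construction, is where the bulk of the technical work will lie and is the point at which the equivariance genuinely enters the argument.
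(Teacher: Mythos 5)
Your proposal is correct and follows essentially the same route the paper takes: the paper proves Theorem~\ref{mu-well-defined} by citing the non-equivariant argument of \cite{MR2330153} and asserting that it carries over verbatim, and your three-part reduction (well-definedness and additivity, bordism invariance via a cylindrical-end/attaching argument, and vector bundle modification via a Kasparov-product factorization against the vertical Dirac family, using the $\Spin^c(2k+1)$-equivariant Bott generator property) is precisely the structure of that non-equivariant proof, equivariantized in the obvious way. The only mild overstatement is at the end: the Bott generator property is already formulated $\Spin^c(2k+1)$-equivariantly in Section~\ref{geometric-sec}, so no genuine ``upgrade'' of that computation is required before passing to the associated bundle.
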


The   non-equivariant case of the theorem is proved in   \cite{MR2330153}.  The proof for the equivariant case is exactly the same and therefore  will not be repeated. 

Our   aim in this paper is to prove the second main theorem concerning the classes $[M,E]$.

\begin{theorem}
\label{main-theorem}
If $X$ is any proper, $G$-finite $G$-$CW$-complex, then the index map 
$$
\mu\colon K^G _{\ev/\odd}(X)\longrightarrow KK_{\ev/\odd}^G (C_0(X),\C)
$$
is an isomorphism.
\end{theorem}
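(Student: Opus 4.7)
The plan is to carry out the programme sketched in the introduction: to interpose an auxiliary $\Z/2\Z$-graded abelian group $k_*^G(X)$ which is manifestly a generalized $G$-equivariant homology theory on the category of $G$-finite proper $G$-$CW$-pairs, and to fit it into the commuting triangle displayed in Section~1. A natural way to build such a $k_*^G$ is to parametrize cycles by fixed continuous $G$-equivariant maps into a $G$-finite subcomplex of $\underline E G$ (available by Theorem~\ref{ebar-numerable-thm}), so that the theory becomes essentially a bordism theory of $G$-$\Spin^c$-manifolds mapping to $\underline E G$ and relative versions for pairs can be defined by imposing boundary conditions in the subspace. The Dirac construction of Section~\ref{kk-review-sec} then extends to a natural transformation $k_*^G(X) \to KK_*^G(C_0(X),\C)$, and a forgetful map $k_*^G(X) \to K_*^G(X)$ fits together with $\mu$ into the commuting triangle.

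The first half of the argument will show that $k_*^G(X) \to KK_*^G(C_0(X),\C)$ is an isomorphism. Granted that both sides are $\Z/2\Z$-graded homology theories on $G$-finite proper $G$-$CW$-pairs — standard for Kasparov theory in the first variable, and by construction for $k_*^G$ — a Mayer--Vietoris comparison and induction on the number of equivariant cells reduce the claim to the base case $X = G/H$ for a finite subgroup $H$ of $G$. There both groups identify with the complex representation ring $R(H)$ concentrated in even degree, and a direct $\Spin^c$-Dirac index computation on the tangent representation to a point of $G/H$ shows that $\mu$ sends a canonical generator to a generator.

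The second half is to prove that $k_*^G(X) \to K_*^G(X)$ is surjective. Given any geometric cycle $(M,E,f)$, the L\"uck--Oliver theorem on the existence of equivariant complex vector bundles over $G$-finite proper $G$-$CW$-complexes guarantees an ample supply of $G$-equivariant bundles on $\underline E G$ and, by pullback, on $M$. Using these bundles one stabilizes the normal data of $M$ and, after finitely many vector bundle modifications together with a bordism, represents $(M,E,f)$ by a cycle that lies in the restricted class defining $k_*^G(X)$. This is the step where the equivariant geometric content of the paper is decisively used.

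Given these two facts the theorem follows by the standard diagram chase indicated in the introduction: surjectivity of $k_*^G(X) \to K_*^G(X)$ together with bijectivity of $k_*^G(X) \to KK_*^G(C_0(X),\C)$ force $\mu$ to be both surjective (immediately from the triangle) and injective (by lifting any element of $\ker\mu$ through the surjection). The \emph{main obstacle} is the construction of $k_*^G(X)$: one must find a variant of the equivariant $K$-cycles that is rigid enough to yield a genuine homology theory on proper $G$-$CW$-pairs, yet flexible enough that every Baum--Douglas cycle, modulo the relations of Definition~\ref{k-geom-def}, can be lifted to it. The Dirac-operator computation on $G/H$ is by now routine, but the surjectivity step relies critically on L\"uck--Oliver, which is precisely the ingredient not available in more general settings and which separates this argument from the non-equivariant case treated in \cite{MR2330153}.
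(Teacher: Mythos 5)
Your overall architecture coincides with the paper's: interpose a technical homology theory $k^G_*$ into a commuting triangle, show $k^G_*\to KK^G_*$ is an isomorphism by comparing homology theories on $G/H$, show $k^G_*\to K^G_*$ is surjective via L\"uck--Oliver, and finish by a diagram chase. But there is a genuine gap precisely at the step you yourself flag as the main obstacle. The theory you sketch --- ``essentially a bordism theory of $G$-$\Spin^c$-manifolds mapping to $\underline E G$'' carrying a $K$-theory class --- would not be $2$-periodic and would not agree with $KK^G_*(C_0(X),\C)$ even for $G$ trivial and $X=\Pt$: $\Spin^c$-bordism of a point is a polynomial ring, not $\Z$ concentrated in even degree. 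The paper's crucial device, which your sketch omits, is the notion of a \emph{stable $(Z,E)$-manifold}: a manifold $M$ with a map $h\colon M\to Z$ to a $G$-finite subcomplex of $\underline E G$ together with a stable isomorphism $\R^r\oplus TM\cong\R^s\oplus h^*E$ for a fixed complex $G$-bundle $E$ on $Z$. This normal framing is what generates the periodicity maps $\beta^F\colon\Omega^{(Z,E)}_j\to\Omega^{(Z,E\oplus F)}_{j+2\operatorname{rank}F}$, and $k^G_j(X)$ is defined as the \emph{direct limit} over the directed set of pairs $(Z,E)$ --- a directed set whose very existence is what the L\"uck--Oliver material in Section~\ref{vb-sec} supplies.

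Relatedly, your assertion that on $X=G/H$ ``both groups identify with $R(H)$'' and that the verification is ``routine'' overstates what is automatic. The paper must actually \emph{compute} $k^H_0(\Pt)$ and $k^H_1(\Pt)$, and the argument depends essentially on the stable normal framing: one embeds $M$ in a representation, augments by the regular representation to trivialize the normal bundle, applies $\beta^F$ to replace $M$ by a sphere in a representation, splits the $K$-theory class via Mayer--Vietoris for the ball/complement decomposition, and only then invokes Bott periodicity. With a bare $\Spin^c$-bordism theory none of this reduction to spheres is available. Likewise, the surjectivity of $k^G_j(X)\to K^G_j(X)$ is not merely a ``stabilization of normal data'': given a $K$-cycle $(M,E,f)$ one must first perform an explicit vector bundle modification by a bundle $F_1$ chosen (via Corollary~\ref{dir-summand-lemma} and the two-out-of-three principle) so that the modified manifold $\widehat M$ actually acquires a stable $(Z,E)$-structure compatible with its $\Spin^c$-structure. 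Your proposal has the right skeleton, but the content of the theorem lies exactly in the pieces the sketch leaves unspecified.
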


The non-equivariant version of the theorem is due to Baum and Douglas, and is proved in detail in \cite{MR2330153}.  Although the proof of the equivariant result is   the same  in outline,   new issues must also be resolved   having to do with the properties of equivariant vector bundles on $G$-compact proper $G$-spaces.  These we shall consider next.

\section{Equivariant Vector Bundles}

\label{vb-sec}
Throughout this section we shall use the term \emph{$G$-bundle} as an abbreviation for  \emph{$G$-equivariant complex vector bundle.}
We shall review 
the basic theory of   $G$-bundles over $G$-compact proper $G$-spaces, mostly
as  worked out by L\"uck and Oliver in \cite{MR1838997}.  In the next section we shall recast their results in the language of $C^*$-algebra $K$-theory.

\begin{theorem} 
\label{L-O-thm}
Let $X$ be a $G$-compact, proper $G$-space. There is a   $G$-bundle 
$E$ over $X$ such that for every $x \in X$, the fiber $E_x$  is
{contained in} a multiple of the regular representation of the isotropy group $G_x$.  \qed
\end{theorem}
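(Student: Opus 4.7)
My plan is to construct $E$ by producing local models from the slice theorem for proper actions and then globalizing them by a combination of stabilization and spectral cut-off; the globalization step is delicate and is where the real content of the L\"uck-Oliver theorem lies.

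By properness, every $x\in X$ has a $G$-invariant open tube $U_x\cong G\times_{G_x} S_x$ with $G_x$ finite, and by $G$-compactness finitely many such tubes $U_1,\dots,U_n$, with associated finite isotropy groups $H_1,\dots,H_n$, cover $X$. On each tube the induced bundle
\[
F_i \;:=\; G\times_{H_i}\bigl(S_i\times \C[H_i]\bigr)
\]
is a $G$-bundle whose fiber at any $y\in U_i$ with isotropy $K\le H_i$ is $\C[H_i]|_K \cong \C[K]^{[H_i:K]}$, a multiple of the regular representation of $K$. So each $F_i$ already satisfies the desired condition on its domain, and the problem reduces to assembling the $F_i$ into a single global $G$-bundle on $X$.

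To assemble, I would choose a $G$-invariant partition of unity $\{\phi_i^2\}$ subordinate to $\{U_i\}$, which exists because $X/G$ is compact Hausdorff; realize each $F_i$ as the image of a $G$-equivariant finite-rank projection $p_i \in \End_{C_0(U_i)}\bigl(C_0(U_i)\otimes V_i\bigr)$ for some finite-dimensional $G$-representation $V_i$ containing $\C[H_i]$ as an $H_i$-subspace; extend each cut-off $\phi_i p_i \phi_i$ by zero to a $G$-equivariant operator on $C_0(X)\otimes V_i$; and form
\[
T \;:=\; \sum_{i=1}^n \phi_i p_i \phi_i
\]
on $\mathcal{H} := \bigoplus_i C_0(X)\otimes V_i$. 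This is a positive, $G$-equivariant, pointwise finite-rank operator whose pointwise range at $x$ is contained in $\bigoplus_i V_i|_{G_x}$, and so in a multiple of the regular representation of $G_x$.

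The main obstacle is that $T$ is not literally a projection: as the $\phi_i$ vary continuously in $[0,1]$, the nonzero eigenvalues $\phi_i(x)^2$ of $T_x$ also vary continuously, so there is no automatic spectral gap and the pointwise range of $T$ need not form a vector bundle. The plan for overcoming this is (i) to stabilize by replacing each $V_i$ with a sufficiently large multiple of the regular representation of $G$, in order to kill the topological obstruction to extending $F_i$ across $X\setminus U_i$ as a $G$-bundle of constant rank; (ii) to exploit the finiteness of orbit types on a $G$-compact proper $G$-space in order to induct stratum by stratum, reducing at each step to an equivariant extension problem for a finite compact group where ``enough equivariant bundles exist'' is classical (Peter-Weyl); and (iii) to refine the partition of unity so that $T$ can be perturbed to an operator $T'$ with a uniform spectral gap at $\tfrac12$. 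The continuous functional calculus applied to $T'$ then yields a $G$-equivariant, continuous, pointwise finite-rank projection $P$, and $E := \mathrm{image}(P)$ is the required $G$-bundle. Carrying out steps (i)--(iii) is the main technical content of the L\"uck-Oliver theorem; the slice-theorem local construction and the partition-of-unity bookkeeping are comparatively routine.
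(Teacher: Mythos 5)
The paper does not actually re-prove this theorem: its proof consists of a citation to L\"uck--Oliver \cite[Corollary~2.8]{MR1838997} for the case of a $G$-finite proper $G$-$CW$-complex, followed by the remark that the general $G$-compact proper $G$-space case reduces to the $CW$-case by pulling the bundle back along the classifying map to $\underline{E}G$ supplied by Theorem~\ref{ebar-numerable-thm}. Your proposal instead attempts to derive the L\"uck--Oliver result from scratch, and it has a concrete gap at the very first step of the globalization. You ask for a \emph{finite-dimensional} $G$-representation $V_i$ whose restriction to the finite subgroup $H_i$ contains $\C[H_i]$. For a general countable discrete $G$ no such $V_i$ exists: if $G$ is a Tarski monster group containing a nontrivial finite cyclic subgroup $H$, then $G$ has no nontrivial finite-dimensional representations whatsoever (it is infinite, finitely generated, simple, and not linear, so any finite-dimensional representation factors through the trivial group), and hence the restriction to $H$ of every finite-dimensional $G$-representation is a trivial $H$-module, which cannot contain $\C[H]$. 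This is precisely what makes the theorem nontrivial: one cannot fit the local models $F_i$ inside a finite-rank $G$-equivariant trivial bundle, and must instead work inside an infinite-dimensional ambient such as $X\times\C[G]$ --- exactly the device the paper adopts in Section~\ref{cstar-sec}, and the one underlying L\"uck--Oliver's own argument.

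Even granting such a fix, the rest of your sketch is not yet a proof. You correctly diagnose that the operator $T=\sum_i\phi_i p_i\phi_i$ has no a priori spectral gap, and you propose to repair this by stabilization, induction over orbit-type strata, and a refinement of the partition of unity, but you then explicitly defer all of steps (i)--(iii) as ``the main technical content of the L\"uck--Oliver theorem.'' That is, the part you leave open is the part that needs proving; L\"uck--Oliver's inductive extension over equivariant cells (or, in the paper's $C^*$-algebraic recasting, the directed-set and approximate-unit machinery of Section~\ref{cstar-sec}) is where the work actually happens. The paper's choice to cite \cite{MR1838997} and add only the pullback reduction is the honest acknowledgment that this theorem is an input to the paper rather than something proved within it; a complete blind proof would need to supply the inductive extension argument that you have only named.
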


\begin{proof}
This is proved for $G$-finite proper $G$-$CW$-complexes in  \cite[Corollary~2.8]{MR1838997}.
That result extends to  more general  $X$     by pulling back   along the map supplied by Theorem~\ref{ebar-numerable-thm}.  
\end{proof}

\begin{corollary}[{Compare \cite[Lemma~3.8]{MR1838997}}] 
\label{dir-summand-lemma}
Let $Z$ be a $G$-compact proper $G$-space and let $X$ be a closed, $G$-invariant  subset of $Z$.  If $F$ is any $G$-bundle on $X$, then there is a $G$-bundle $E$ on $Z$ such that $F$ embeds as a summand of  $E\vert_{X}$.
\end{corollary}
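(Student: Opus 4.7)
The plan is to apply Theorem~\ref{L-O-thm} to the ambient space $Z$ in order to produce a single $G$-bundle $E_0$ on $Z$ whose fibers are sufficiently large that every finite-dimensional $G_z$-representation occurs as a summand of some multiple $E_0|_z^{\,N}$.  The goal is then to realize $F$ as a $G$-equivariant subbundle of $E_0^{\,N}|_X$ for a suitable $N$ and, using a $G$-invariant Hermitian metric, to split it off as a direct summand; the bundle $E := E_0^{\,N}$, which is defined on all of $Z$, will then have $F$ as a summand of $E|_X$.

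First I would fix a single integer $N$ that works uniformly.  Since $X$ is $G$-compact, only finitely many isotropy types occur among its points, so there exists one $N$ such that for every $x\in X$ the $G_x$-representation $F|_x$ is a subrepresentation of $E_0|_x^{\,N}$.  Next I would cover $X$ by finitely many $G$-invariant tubes $U_1,\dots,U_k$ of the form $U_i = G\times_{H_i} S_i$, where $H_i$ is a finite subgroup of $G$ and $S_i$ is an $H_i$-equivariantly contractible slice.  On each such tube every $G$-equivariant complex vector bundle is isomorphic to the bundle induced from an $H_i$-representation, so that a chosen $H_i$-equivariant injection $F|_{x_i}\hookrightarrow E_0|_{x_i}^{\,N}$ at the core orbit of $U_i$ extends, $G$-equivariantly, to a fiberwise injective bundle map
$$\phi_i \colon F|_{U_i}\longrightarrow E_0^{\,N}|_{U_i}.$$

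Finally I would glue the $\phi_i$ together.  Choose a $G$-equivariant partition of unity $\{\rho_1,\dots,\rho_k\}$ subordinate to $\{U_1,\dots,U_k\}$, which exists by standard proper-action theory, and form the direct-sum map
$$\Phi := (\rho_1\phi_1,\dots,\rho_k\phi_k)\colon F\longrightarrow E_0^{\,Nk}|_X,$$
with each $\rho_i\phi_i$ extended by zero outside $U_i$.  The crucial observation is that $\Phi$ is fiberwise \emph{injective}: at any $x\in X$ at least one $\rho_i(x)$ is strictly positive, and then the $i$-th component of $\Phi(x)$ is a nonzero scalar multiple of the injection $\phi_i(x)$.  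A $G$-invariant Hermitian metric on $E_0^{\,Nk}|_X$, which exists by standard averaging arguments for proper actions, then realizes $F$ as a direct summand $F\oplus F^\perp = E_0^{\,Nk}|_X$, so that taking $E := E_0^{\,Nk}$ on $Z$ completes the argument.

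The main obstacle, past invoking Theorem~\ref{L-O-thm}, is ensuring that the global bundle map $\Phi$ is truly fiberwise injective and not merely nonzero.  This is the reason for assembling the local $\phi_i$ as components of a direct sum rather than as a sum: a single positive weight $\rho_i(x)$ then suffices to force injectivity at $x$, since a linear map into a direct sum is injective as soon as any one of its components is injective, regardless of the remaining weights vanishing.
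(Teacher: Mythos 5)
Your proposal follows essentially the same route as the paper's proof: both fix a bundle from Theorem~\ref{L-O-thm}, cover $X$ by finitely many $G$-invariant tubes over which $F$ and the fixed bundle are induced from representations of finite isotropy groups, take a suitable direct-sum power to achieve local fiberwise embeddings, and glue with a $G$-invariant partition of unity into a further direct-sum power before splitting off $F$ with an invariant Hermitian metric. The only difference is one of exposition — you make explicit the direct-sum trick that guarantees fiberwise injectivity of the glued map, which the paper compresses into ``a standard partition of unity argument.''
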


\begin{proof}
Fix a $G$-bundle $E$ on $Z$, as in Theorem~\ref{L-O-thm}.   There  are $G$-invariant open subsets $U_1,\dots, U_n$ of $X$ such that:
\begin{enumerate}[\rm (a)]
\item The sets cover $X$.
\item For each $j$ there  is a finite subgroup of $F_j\subseteq G$ and 
an equivariant map $\pi_j\colon  U_j\to G/F_j$.
 \item   $F\vert _{U_j}$ is isomorphic to a bundle pulled back along $\pi_j$.
\item  $E\vert_{U_j}$ is also  isomorphic to a bundle pulled back along  $\pi_j$.
\end{enumerate}
Replacing $E$ by a direct sum $E\oplus \cdots \oplus E$, if necessary, we find that $F\vert _{U_j}$ may be embedded as a summand of $E\vert _{U_j}$, for every $j$.  Making a second replacement of $E$ by an $n$-fold direct sum $E\oplus \cdots \oplus E$ and using a standard partition of unity argument, we may now embed $F$ into $E$, as required.
\end{proof}

More generally, if $f\colon X\to Z$ is a map between $G$-compact proper $G$-spaces, and if $F$ is a $G$-bundle on $X$, then the same argument shows that $F$ is isomorphic to a summand of the pullback along $f$ of some $G$-bundle  on $Z$.
 
\begin{definition} 
If $S$ is any set, then denote by $\C[S]$ the free vector space on the set $S$,   equipped with the standard inner product in which the elements of $S$ are orthonormal.  If $S$ is equipped with an action of $G$, then we shall consider $\C[S]$ to be equipped with the corresponding permutation action of $G$.
\end{definition}

We are interested primarily in the case where $S=G$, which we shall view as equipped with the usual left translation action of $G$.

\begin{definition}
\label{standard-def}
A \emph{standard $G$-bundle} on a $G$-compact proper $G$-space $X$ is a $G$-invariant subset $E$  of  $X\Times \C[G]$ with the property  that for every compact subset $K\subseteq X$ there is a \emph{finite} subset $S\subseteq G$ such that the intersection  of $E$ with   $K\Times \C[G]$ is a (nonequivariant) complex vector subbundle of $K\times \C[S]$.
\end{definition}
 
\begin{remark}
We require that the restriction of $E$ to $K$, as above,  be a topological vector subbundle of the finite-dimensional trivial bundle $K\Times \C[S]$.  This fixes the topology on $E$ and determines a   $G$-bundle structure.
\end{remark}

It follows from a standard partition of unity  argument  that every $G$-bundle on $X$ is isomorphic to a standard $G$-bundle.  
We are going to prove the following result, which gives the set of standard $G$-bundles a useful directed set structure.

\begin{theorem}  
\label{proper-bundle-thm}
Any two standard $G$-bundles   are subbundles of a common third.  Moreover the union of all standard $G$-bundles is $X\Times\C[G]$.\end{theorem}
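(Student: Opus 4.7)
I would prove the two parts in order, using the slice structure of proper $G$-spaces together with Theorem~\ref{L-O-thm} and Corollary~\ref{dir-summand-lemma}.

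For part 2 (exhaustion), let $(x_0, v_0) \in X \times \C[G]$. Since the action is proper, the stabilizer $H := G_{x_0}$ is finite, so the subspace $W \subseteq \C[G]$ spanned by the finite $H$-orbit of $v_0$ (under left translation) is finite-dimensional and $H$-invariant. Choose a slice $V$ at $x_0$, giving a $G$-equivariant identification $G \cdot V \cong G \times_H V$. The trivial bundle $V \times W$ is $H$-equivariant, and the assignment $[g, (v,w)] \mapsto ([g,v], g\cdot w)$ realizes its induction as a $G$-equivariant subbundle $\widetilde W$ of $(G \times_H V) \times \C[G]$ with fiber $g\cdot W$ over $[g,v]$. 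This is a standard $G$-bundle on the open set $G\cdot V$ containing $(x_0, v_0)$. To extend to all of $X$, restrict $\widetilde W$ to the closed orbit $G\cdot x_0$, apply Corollary~\ref{dir-summand-lemma} to obtain a $G$-bundle $E'$ on $X$ containing this restriction as a summand, and realize $E'$ as a standard $G$-bundle inside $X\times \C[G]$ whose fiber over $x_0$ still contains $v_0$ (using Theorem~\ref{L-O-thm} as the supply of ambient embeddings).

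For part 1 (directedness), the naive fiberwise sum $E_1+E_2 \subseteq X\times \C[G]$ generally fails to be a vector bundle, since $\dim((E_1)_x \cap (E_2)_x)$ is only upper semicontinuous. My approach is to enlarge $E_1+E_2$ to an actual bundle. By $G$-compactness, cover $X/G$ by finitely many open sets $U_i$ with $\pi^{-1}(U_i) \cong G \times_{H_i} V_i$, each $V_i$ relatively compact. On $V_i$ the fibers of both $E_1$ and $E_2$ lie in a common finite-dimensional $H_i$-invariant subspace $W_i \subseteq \C[G]$, which one obtains by taking the $H_i$-span of $\bigcup_{v\in V_i} ((E_1)_v + (E_2)_v)$; this is finite-dimensional because, by the defining property of a standard $G$-bundle applied to $\overline{V_i}$, both $E_1$ and $E_2$ restricted to $\overline{V_i}$ lie in $\overline{V_i}\times \C[S_i]$ for some finite $S_i\subseteq G$, forcing $W_i\subseteq \C[H_iS_i]$. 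Inducing $V_i \times W_i$ up now gives a standard $G$-subbundle $\widetilde W_i$ of $\pi^{-1}(U_i)\times \C[G]$ containing both $E_1$ and $E_2$ over $\pi^{-1}(U_i)$.

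The main obstacle will be the final gluing step: assembling the local pieces $\widetilde W_i$ into a single standard $G$-bundle $E_3 \subseteq X \times \C[G]$ with locally constant fiber rank, containing all of $E_1\cup E_2$. My plan is to use Theorem~\ref{L-O-thm} to produce a sufficiently large ambient $G$-bundle on $X$, realize it inside $X\times \C[G]$ so that its fibers simultaneously contain each $\widetilde W_i$, and use a $G$-equivariant partition of unity subordinate to $\{\pi^{-1}(U_i)\}$ together with the partition-of-unity construction of Corollary~\ref{dir-summand-lemma} to carry out the assembly. The combination of $G$-compactness (giving a finite cover), properness (giving finite isotropy and local finiteness of translates), and this gluing is what makes the construction tractable.
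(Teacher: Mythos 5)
Your sketch assembles the right ingredients (slices, properness, Theorem~\ref{L-O-thm}, Corollary~\ref{dir-summand-lemma}, partition of unity) but does not close either part, and the unresolved steps are precisely where the real work lies.

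For part 2, the gap is at ``realize $E'$ as a standard $G$-bundle inside $X\times\C[G]$ whose fiber over $x_0$ still contains $v_0$.'' You have $E'$ as an abstract $G$-bundle on $X$, an embedding of $\widetilde W\vert_{G\cdot x_0}$ as a summand of $E'\vert_{G\cdot x_0}$, and a preferred realization of $\widetilde W\vert_{G\cdot x_0}$ inside $(G\cdot x_0)\times\C[G]$. What you actually need is to extend a standard realization of $E'\vert_{G\cdot x_0}$ from the closed subset $G\cdot x_0$ to a standard realization of $E'$ on all of $X$, compatibly with that preferred embedding. That extension problem is exactly the content of Lemma~\ref{technical-L-O-lemma}, which you neither cite nor reprove; invoking Theorem~\ref{L-O-thm} does not by itself give you an embedding with a prescribed fiber at $x_0$.

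For part 1, you explicitly flag the gluing as ``the main obstacle'' and then describe it only as a plan. But assembling the local pieces $\widetilde W_i$ into a single standard $G$-bundle of locally constant rank containing $E_1\cup E_2$ is not a routine partition-of-unity patching: the local bundles have differing ranks and there is no canonical map between them on overlaps, so a naive convex combination will drop rank. The paper avoids this entirely by a reduction you do not make: since $X$ is $G$-compact, choose a compact $K$ whose $G$-translates cover $X$ and a finite $S\subseteq G$ with $E_1\vert_K,\,E_2\vert_K\subseteq K\times\C[S]$; then any standard $G$-bundle containing $K\times\C[S]$ automatically contains $E_1$ and $E_2$ by equivariance. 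With that reduction, both parts of the theorem collapse to a single statement (Lemma~\ref{lem:compact_emb_standard}: any $K\times\C[S]$ lies in some standard $G$-bundle), and the gluing is then done once, by induction on the pieces of a slice cover of $K$, using Lemma~\ref{technical-L-O-lemma} at each inductive step. You would need either to make that reduction or to actually carry out the inductive gluing you gesture at; as written, the hard step is asserted rather than proved.
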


\begin{remark}
\label{modification-remark}
In Section~\ref{proof-sec} we shall modify Definition~\ref{standard-def} very slightly by replacing $G$ with a countable disjoint union $G_\infty = G\sqcup G \sqcup \dots $ (thought of as a left $G$-set).    Theorem~\ref{proper-bundle-thm} remains true, with the same proof.
\end{remark}

Since the theorem is obvious if $G$ is finite, we shall assume $G$ is infinite  until the proof of Theorem~\ref{proper-bundle-thm} is concluded.

\begin{lemma} 
\label{standard-bundle-lemma}
If  $S$ is a finite subset of $G$, if $K$ is a compact subset of $X$, and if $E $ is any $G$-bundle over $X$, then there is  a standard $G$-bundle $E_1$ that is isomorphic to $E$ and whose restriction to $K$ is orthogonal to $K\Times \C[S]$. 
\end{lemma}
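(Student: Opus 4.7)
The plan is to take any realization of $E$ as a standard $G$-bundle and twist it by a $G$-equivariant unitary of the trivial bundle $X \Times \C[G]$ in order to push its fibers over $K$ into coordinates disjoint from $S$. The natural source of such unitaries is right translation on $\C[G]$, which commutes with the left-translation $G$-action and so descends to an equivariant automorphism of $X \Times \C[G]$.

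First, I would invoke the remark (made just before the statement of Theorem~\ref{proper-bundle-thm}) that every $G$-bundle is isomorphic to a standard one, and fix an isomorphism of $E$ with a standard $G$-bundle $E_0 \subseteq X \Times \C[G]$. By Definition~\ref{standard-def} applied to the compact set $K$, there is a finite subset $T \subseteq G$ such that $E_0 \cap (K \Times \C[G]) \subseteq K \Times \C[T]$.

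Next, for each $g \in G$, define $\rho_g \colon \C[G] \to \C[G]$ on basis elements by $\rho_g(h) = hg$. This permutes the orthonormal basis $G$, so it is unitary, and it manifestly commutes with the left-regular $G$-action; hence $\Id_X \Times \rho_g$ is a $G$-equivariant unitary automorphism of the trivial bundle $X \Times \C[G]$. Consequently $E_1 := (\Id_X \Times \rho_g)(E_0)$ is again a standard $G$-bundle, isomorphic to $E_0$ (and thus to $E$), and over $K$ its fibers are contained in $K \Times \C[Tg]$. Since the basis $G$ of $\C[G]$ is orthonormal, $E_1|_K$ is orthogonal to $K \Times \C[S]$ precisely when $Tg \cap S = \emptyset$, i.e.\ when $g \notin T^{-1}S$. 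Since $G$ is infinite while $T^{-1}S$ is finite, such a $g$ exists, and setting $E_1 := (\Id_X \Times \rho_g)(E_0)$ for any such choice finishes the construction.

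I expect no real obstacle: the entire content of the lemma is that the trivial bundle $X \Times \C[G]$ has a supply of $G$-equivariant unitary automorphisms coming from right translation, independent of the left $G$-action, and this supply is rich enough to slide any finite-dimensional subbundle away from a prescribed finite set of coordinates. The only place where any care is needed is the verification that $\rho_g$ commutes with the (left) $G$-action and that right translation preserves the standard-bundle property — both of which are immediate from the definitions.
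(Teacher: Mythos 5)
Your proof is correct and takes essentially the same approach as the paper: realize $E$ as a standard $G$-bundle, twist by right translation $\rho_g$ (which is a $G$-equivariant unitary since it commutes with the left-regular action), and choose $g$ outside a finite set so the translated coordinates miss $S$. The only cosmetic difference is that the paper first enlarges $S$ to contain $T$ and then requires $S\cap Sg=\emptyset$, whereas you keep $S$ and $T$ separate and require $Tg\cap S=\emptyset$; both use the standing assumption that $G$ is infinite to produce such a $g$.
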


\begin{proof}
Let $E_1$ be any standard bundle that is isomorphic to the equivariant vector bundle $E$. If $g\in G$, then set   
$$E_1\cdot g = \{\,  (x,e\cdot g )\, :\, e\in E_{1,x}\,\}$$
is also standard $G$-bundle.  Here, in forming the vectors $e\cdot g$ we are
using the \emph{right} translation  action of $G$ on itself and hence on
$\C[G]$.  The bundle ${ E_1\cdot g}$ is isomorphic to $E_1 $, and hence to $E$.    If we enlarge $S$, if necessary, so that $E_1\vert _K \subseteq K\times \C[S]$, and if we choose $g\in G$ so that    $S\cap  Sg  = \emptyset$, then       $(E_1\cdot g)\vert _K $ 
 is orthogonal to $K\Times \C[S]$, as required. \end{proof}

\begin{lemma} 
\label{simple-orth-lemma}
Let $E$ be a $G$-bundle on $X$ and let $E_2$ be a standard $G$-bundle.  There is a standard $G$-bundle that is isomorphic to $E$ and orthogonal to $E_2$.  \end{lemma}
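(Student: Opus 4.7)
The plan is to combine $G$-compactness of $X$ with Lemma~\ref{standard-bundle-lemma}, using the fact that the standard inner product on $\C[G]$ is invariant under the left translation action of $G$.

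First, since $X$ is $G$-compact, I would choose a compact set $K\subseteq X$ with $GK = X$. Because $E_2$ is a standard $G$-bundle, the definition supplies a finite subset $S\subseteq G$ such that $E_2\vert_K \subseteq K\Times \C[S]$. Applying Lemma~\ref{standard-bundle-lemma} to $E$, $K$, and $S$, I obtain a standard $G$-bundle $E_1$ that is isomorphic to $E$ and whose restriction to $K$ is orthogonal to $K\Times \C[S]$; in particular $(E_1)_y \perp (E_2)_y$ for every $y\in K$.

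Next I would promote this fiberwise orthogonality on $K$ to fiberwise orthogonality on all of $X$. Given $x\in X$, write $x = gy$ with $y\in K$ and $g\in G$. Since $E_1$ and $E_2$ are $G$-invariant subsets of $X\Times \C[G]$, their fibers transform as $(E_i)_x = g\cdot (E_i)_y$ (with $g$ acting on $\C[G]$ by left translation). The left translation action permutes the canonical orthonormal basis of $\C[G]$ and therefore preserves the standard inner product, so
\[
(E_1)_x \perp (E_2)_x \iff g\cdot (E_1)_y \perp g\cdot (E_2)_y \iff (E_1)_y \perp (E_2)_y,
\]
and the last orthogonality holds by the previous paragraph. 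Hence $E_1$ is orthogonal to $E_2$ pointwise, and thus as subbundles of $X\Times \C[G]$, which completes the proof. I do not anticipate a serious obstacle here: the only subtlety is to ensure the inner product used in the standardness condition is the $G$-invariant one, which is guaranteed by the convention set up in the definition preceding Definition~\ref{standard-def}.
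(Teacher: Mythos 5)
Your proof is correct and matches the paper's argument: choose a compact $K$ with $GK = X$, choose a finite $S$ with $E_2\vert_K \subseteq K\times\C[S]$, and apply Lemma~\ref{standard-bundle-lemma}. You also spell out (correctly) the step the paper leaves implicit, namely that $G$-invariance of the two subbundles together with the $G$-invariance of the inner product on $\C[G]$ propagates orthogonality from $K$ to all of $X$.
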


\begin{proof}  
Let $K$ be a compact subset of $X$ whose $G$-saturation is $X$, and let $S$ be a finite subset of $G$ such that  $E_2\vert _K\subseteq K\times \C[S]$.  Now apply the previous lemma.\end{proof}

\begin{lemma}
\label{technical-L-O-lemma}
Let $U$ be a $G$-invariant open subset of $X$, and let $Y$ and $Z$  be  $G$-invariant closed  subsets of $X$ such that 
$$ Z \subseteq U \subseteq Y \subseteq X.$$
Let $F$ be a  standard $G$-bundle over $Y$.  There is a standard $G$-bundle  $E_1$ over $X$ such that $F\vert _Z \subseteq E_1\vert _Z$.  Moreover given a standard $G$-bundle $E_2$ over $X$ such that $E_2\vert_{Y}$ is orthogonal to $F$, the standard $G$-bundle $E_1$ may be chosen to be orthogonal to $E_2$.
\end{lemma}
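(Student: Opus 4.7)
The plan is to construct $E_1$ as a continuous $G$-equivariant unitary rotation of an auxiliary standard $G$-bundle $E'$ on $X$ that contains an abstract isomorphic copy of $F$ over $Y$; the rotation will be tuned by a cutoff function so that, on $Z$, it moves the abstract copy into literal coincidence with $F \subseteq Y \Times \C[G]$, while remaining the identity outside $U$.

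First I would apply Corollary~\ref{dir-summand-lemma} to the closed $G$-invariant subset $Y \subseteq X$ and the bundle $F$, obtaining a standard $G$-bundle $E'$ on $X$ together with a $G$-equivariant isometric bundle embedding $\iota \colon F \hookrightarrow E'|_Y$ as a summand. Write $F' = \iota(F) \subseteq E'|_Y$. Using right-translations of the ambient $\C[G]$ (as in the proof of Lemma~\ref{standard-bundle-lemma}), I may replace $E'$ and $\iota$ by translates so that, simultaneously, $E'$ is orthogonal to $E_2$ on $X$ and $F'$ is orthogonal to $F$ inside $Y \Times \C[G]$. Both orthogonality conditions constrain only finitely many coordinate directions in $\C[G]$ over any compact $G$-slice of $X$, so a single sufficiently ``far'' right translation can arrange them.

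Next I choose a $G$-invariant continuous cutoff $\phi \colon X \to [0,1]$ with $\phi \equiv 1$ on $Z$ and $\operatorname{supp}(\phi) \subseteq U$. For each $x \in Y$ define a unitary $V_x$ on $\C[G]$ by declaring $V_x$ to be the identity on $(F_x \oplus F'_x)^{\perp}$ and, on $F_x \oplus F'_x$, to act by the rotation of angle $\theta(x) = \phi(x)\pi/2$ that pairs $F_x$ with $F'_x$ via $\iota$; explicitly,
\[
V_x(v,w) = \bigl(\cos\theta(x)\cdot v - \sin\theta(x)\cdot \iota^{-1}(w),\ \sin\theta(x)\cdot \iota(v) + \cos\theta(x)\cdot w\bigr)
\]
for $(v,w) \in F_x \oplus F'_x$. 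For $x \notin U$ set $V_x = I$; this is consistent since $\phi$ vanishes there. The map $x \mapsto V_x$ is then continuous, $G$-equivariant, and unitary-valued. Define
\[
E_1 = \{(x, V_x v) : (x,v) \in E'\} \subseteq X \Times \C[G].
\]
Because $V$ is locally supported in a finite subset of $\C[G]$ (inherited from the standardness of $F$, $F'$ and $E'$), $E_1$ is a standard $G$-bundle. On $Z$, $V_z$ is rotation by $\pi/2$, which carries $F'_z \subseteq E'|_z$ onto $F_z$, so $F|_Z \subseteq E_1|_Z$. Since $E_2|_Y \perp F$ and $E_2 \perp E' \supseteq F'$, one has $E_2 \subseteq (F \oplus F')^{\perp}$ on $Y$, where $V$ acts as the identity; hence $V$ fixes $E_2$ pointwise and $E_1 = V E' \perp V E_2 = E_2$ on all of $X$.

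The principal difficulty is in constructing the unitary family $\{V_x\}$ continuously and $G$-equivariantly, and verifying that the formula extends consistently by the identity across $\partial U$: this latter is automatic because $\theta(x) = 0$ there, but it depends on the two-plane $F_x \oplus F'_x$ varying continuously on $Y$ and on $\iota$ being a $G$-equivariant isometry (which one can arrange by polar decomposition relative to a $G$-invariant Hermitian metric on $E'$). Standardness of $E_1$ and the orthogonality claim then follow routinely from the explicit formula for $V$.
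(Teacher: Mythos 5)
Your proof is correct and follows essentially the same strategy as the paper: invoke Corollary~\ref{dir-summand-lemma} to obtain an ambient bundle containing a copy of $F$ over $Y$, right-translate to arrange orthogonality both to $E_2$ and to $F$ itself, and then use a $G$-invariant cutoff supported in $U$ to interpolate between ``coinciding with $F$'' on $Z$ and ``untouched'' outside $U$. The one genuine difference is the interpolation mechanism: the paper forms the convex combination $(\varphi\Phi + \psi\Psi)[E]$ of two bundle embeddings, whereas you conjugate a fixed standard $G$-bundle $E'$ by a continuously varying family of unitaries $V_x$ (a two-plane rotation in $F_x\oplus F'_x$, identity on the complement). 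Your variant is slightly cleaner in one respect: because each $V_x$ is unitary, $E_1 = V E'$ is automatically a subbundle and automatically orthogonal to anything pointwise fixed by $V$, so the fiberwise injectivity and the orthogonality to $E_2$ fall out with no extra argument; the convex-combination version requires one to check that $\varphi\Phi+\psi\Psi$ stays fiberwise injective. The price is that you must set up the isometric embedding $\iota$ (via polar decomposition) and verify continuity and equivariance of $x\mapsto V_x$, which you correctly flag. Both approaches are sound.
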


\begin{proof}
According to Corollary~\ref{dir-summand-lemma}, there is a $G$-bundle $E$ over $X$ such
that $F$ embeds in $E\vert_Y$.  Any complement of {the image
  of} $F$ in $E\vert_Y$ may be embedded as a standard $G$-bundle $F'$ on $Y$
that is orthogonal to ${F\oplus E_2\vert_Y}$.

{We can choose an isomorphism 
 $\Phi \colon E\vert_Y\to  F\oplus F'$.} 
Next,  there is an embedding  $\Psi$   of $E$ as a standard $G$-bundle on  $X$
such that {$\Psi[E]$ is orthogonal to $E_2$ and
  $\Psi[E]\vert_Y$}  is orthogonal to $F$ (by a slight elaboration of
Lemma~\ref{simple-orth-lemma}).  If we choose a $G$-invariant scalar function
$\varphi$ on $X$ such that $\varphi= 1$ on $Z$ and $\varphi = 0$ outside of
$U$, and if we set $\psi = 1-\varphi$, then  $E_1 = (\varphi \Phi + \psi
\Psi)[E] $ has the required properties.\end{proof}

\begin{lemma}\label{lem:compact_emb_standard}
Let $K$ be any compact subset of $X$, and  let    $S$ be a finite subset of $G$.  There is a standard $G$-bundle  that contains $K\times \C[S]$.
\end{lemma}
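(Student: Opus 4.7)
My approach is to build the standard $G$-bundle by exploiting the slice structure of the proper action: construct the bundle locally on each slice using an explicit $\C[gHS]$-formula, extend to $X$ using Lemma~\ref{technical-L-O-lemma}, and handle general compact $K$ by a finite cover together with an inductive orthogonalization.

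Concretely, by condition (c) of Definition~2.2, each $x_0\in K$ lies in an open $G$-invariant neighborhood $U_{x_0}=G\cdot V_{x_0}\cong G\times_{H_{x_0}}V_{x_0}$, where $H_{x_0}=G_{x_0}$ is finite and $V_{x_0}$ is an $H_{x_0}$-invariant open set. Choose nested shrinkings $V''_{x_0}\subseteq \overline{V''_{x_0}}\subseteq V'_{x_0}\subseteq \overline{V'_{x_0}}\subseteq V_{x_0}$ with $\overline{V'_{x_0}}$ compact and $x_0\in V''_{x_0}$. On the closed $G$-invariant set $Y_{x_0}=G\cdot \overline{V'_{x_0}}$, define
$$F^{x_0}_{[g,v]}\ =\ \C[g\,H_{x_0}\,S]\ \subseteq\ \C[G],\qquad [g,v]\in G\times_{H_{x_0}}\overline{V'_{x_0}}.$$
This is well-defined because $gH_{x_0}S$ depends only on the left coset $gH_{x_0}$; it is a $G$-equivariant standard subbundle of $Y_{x_0}\times\C[G]$ of constant rank $|H_{x_0}S|$; and $F^{x_0}_v\supseteq \C[S]$ for every $v\in V'_{x_0}$ (taking $g=e$). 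Applying Lemma~\ref{technical-L-O-lemma} with $Y=Y_{x_0}$, $U=G\cdot V'_{x_0}$, $Z=G\cdot\overline{V''_{x_0}}$, $F=F^{x_0}$ then produces a standard $G$-bundle $E^{x_0}$ on $X$ with $F^{x_0}|_Z\subseteq E^{x_0}|_Z$; since $x_0\in V''_{x_0}\subseteq Z$, this yields $\{x_0\}\times\C[S]\subseteq E^{x_0}$, settling the case $K=\{x_0\}$ (which is already enough to obtain the ``union'' half of Theorem~\ref{proper-bundle-thm}).

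For general compact $K$, cover $K$ by finitely many inner slices $V''_{x_1},\dots,V''_{x_n}$ (by compactness). For each $i$, the slice construction gives $F^{(i)}=F^{x_i}$ on $Y_i=Y_{x_i}$; extend them inductively to standard $G$-bundles $E^{(i)}$ on $X$ by applying Lemma~\ref{technical-L-O-lemma} with its orthogonality clause, so that each $E^{(i)}$ is orthogonal to $E^{(1)}\oplus\cdots\oplus E^{(i-1)}$. The direct sum $E=\bigoplus_iE^{(i)}$ is then a standard $G$-bundle, and for each $x\in K$ some $V''_{x_i}$ contains $x$, so that $\C[S]\subseteq E^{(i)}_x\subseteq E_x$, giving $K\times\C[S]\subseteq E$.

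\textbf{Main obstacle.} The delicate step is arranging the orthogonality in the iterative extension: naively all the $F^{(i)}$ contain $\C[S]$ in their fibers and so cannot be made literally orthogonal while preserving the $\C[S]$-embedding. The resolution exploits the infinite-dimensionality of $\C[G]$ as an $H_{x_i}$-representation (since $G$ is infinite, it is an infinite direct sum of copies of the regular $H_{x_i}$-representation), which provides abundant room for Lemma~\ref{simple-orth-lemma} to replace $F^{(i)}$ by an orthogonal isomorphic standard bundle on $Y_i$; the cover is ordered so that at the step when the slice $V''_{x_i}$ first picks up a point $x\in K$, the $\C[S]$-coordinates at $x$ have not yet been used by earlier $E^{(j)}$, allowing the desired embedding to be carried through to $E^{(i)}$ via the extension conclusion $F^{(i)}|_Z\subseteq E^{(i)}|_Z$ of Lemma~\ref{technical-L-O-lemma}.
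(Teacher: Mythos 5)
Your overall plan is in the right family as the paper's argument -- a finite slice cover of $K$, an explicit bundle $\C[gHS]$ on each slice, extension over $X$ via Lemma~\ref{technical-L-O-lemma}, and an induction to assemble the pieces -- and you correctly single out the orthogonality step as the delicate point. But the resolution you offer for that step does not work, and this is a genuine gap.

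To apply the orthogonality clause of Lemma~\ref{technical-L-O-lemma} at stage $i$ you must hand it a bundle $F^{(i)}$ on $Y_i$ that is \emph{already orthogonal} to $(E^{(1)}\oplus\cdots\oplus E^{(i-1)})\vert_{Y_i}$; the lemma does not create this orthogonality for you. Your $F^{(i)}$ is pinned down by the requirement $\C[S]\subseteq F^{(i)}_x$ at points of $K\cap V'_{x_i}$, while the earlier $E^{(j)}$ were themselves arranged to contain $\C[S]$ at points of $K\cap Z_j$. Since the inner sets $V''_{x_j}$ must overlap to cover $K$, some point $x\in K$ lies in $V'_{x_i}\cap Z_j$ for $j<i$, and at that $x$ both $E^{(j)}_x$ and $F^{(i)}_x$ contain $\C[S]$, so they cannot be orthogonal. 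Replacing $F^{(i)}$ by an isomorphic orthogonal copy (Lemma~\ref{simple-orth-lemma}) destroys the very property $\C[S]\subseteq F^{(i)}_x$ that you need, and no ordering of the cover can avoid the overlap, because the earlier $E^{(j)}$ are bundles on all of $X$ whose fibers off $Z_j$ are not under your control. The paper's proof sidesteps this by a different trick: at stage $n$ it builds a \emph{larger} local bundle $L$ on $Y$ that simultaneously contains $K_n\times\C[S]$ and the restriction $E_2\vert_Y$ of the already-constructed bundle, and then passes to the orthogonal complement $F=L\ominus E_2\vert_Y$, which is orthogonal to $E_2\vert_Y$ by construction; Lemma~\ref{technical-L-O-lemma} is then applied to $F$, and the output $E_1$ is summed with $E_2$. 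You should incorporate this orthogonal-complement step: enlarge your $F^{(i)}$ to a bundle $L^{(i)}$ on $Y_i$ containing both $\C[S]$ on the relevant compact piece and $(E^{(1)}\oplus\cdots\oplus E^{(i-1)})\vert_{Y_i}$, then run Lemma~\ref{technical-L-O-lemma} with $L^{(i)}\ominus (E^{(1)}\oplus\cdots\oplus E^{(i-1)})\vert_{Y_i}$.
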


\begin{proof}
The compact set $K$ may be written as a finite union of compact sets 
$$
K = K_1 \cup \cdots \cup K_n
$$
where  each $K_j$ is included in a $G$-invariant open set that maps equivariantly onto some proper coset space $G/H_j$, in such a way that $K_j$ maps to the identity coset.  We shall  use induction on $n$.

 Let $E_2$ be a standard $G$-bundle that contains the set 
$$
\bigl ( K_1 \cup \cdots \cup K_{n-1}\bigr ) \times \C [S].
$$
There is a $G$-compact subset $Y\subseteq X$ that contains a $G$-invariant neighborhood $U$ of $K_n$ and over which there is a standard $G$-bundle $L$ such that 
$$K_n\!\times\! \C[S]\subseteq L\vert _{K_n} \quad \text{and} 
\quad E_2 \vert_{Y}\subseteq L.$$
Indeed we may choose $Y$ so that it   maps equivariantly to $G/H_n$, and if $Y_n$ is the inverse image of the identity coset, then we may form
$$L = \bigcup_{g\in G}\,\, gY_n\times {\C[S_ng]},$$
where $S_n$ is a sufficiently large finite and right $H_n$-invariant subset of $G$.

Now apply the previous lemma to the standard $G$-bundle  $F=L\ominus E_2\vert_{Y}$ (the orthogonal complement of ${E_2\vert_{Y}}$ in $L$) to obtain a standard $G$-bundle  $E_1$ on $X$ such that $E_1$ is orthogonal to $E_2$ and 
$$
L\vert_{K_n}\ominus E_2\vert_{K_n}\subseteq E_1\vert_{K_n} .
$$
The standard $G$-bundle $E_1\oplus E_2$ then contains $K\Times \C[S]$, as required.\end{proof}

\begin{proof}[Proof of Theorem~\ref{proper-bundle-thm}]
  Since there is a standard $G$-bundle that contains any given $K\times \C[S]$, it is clear that the union of all standard $G$-bundles is $X\times \C[G]$.  
Let $E_1$ and $E_2$ be  standard $G$-bundles on $X$. 
Choose a compact set $K$ whose $G$-translates cover $X$ and choose a finite set $S\subseteq G$ such that the     $E_1\vert_{K}, E_2\vert_{K}\subseteq K\Times \C[S]$.  
If $E$ is a standard $G$-bundle containing $K\Times \C[S]$, then it contains  $E_1$ and $E_2$.
\end{proof}

\section{C*-Algebras and Equivariant K-Theory}
\label{cstar-sec}

\begin{definition}
If $S$ is any set, then denote by $M[S]$ the $*$-algebra of complex matrices $[T_{s_1s_2}]$ with rows and columns parametrized by the set $S$, all but finitely many of whose entries are zero. 
\end{definition}

We shall be interested in the case where $S=G$.  In this case the group $G$ acts on $M[G]$ by automorphisms via the formula 
$(g\cdot T)_{g_1,g_2} = T_{g^{-1}g_1,g^{-1}g_2}$.

\begin{definition}
Let $X$ be a $G$-compact proper $G$-space.
 Let us call a function  $F\colon X \to M[G]$   \emph{standard} if its matrix element functions  $$F_{g_1,g_2}\colon x\mapsto F(x)_{g_1,g_2}$$  are continuous and compactly supported, and if  for every compact subset $K$ of $X$   all but finitely many of them vanish outside of $K$. 
We shall denote by $\mathcal C (X,G)$ the $*$-algebra of all standard, $G$-equi\-variant functions from $X$ to $M[G]$.
\end{definition}

Note   that if  $P$ is a projection in the $*$-algebra $\mathcal C (X,G)$, then the range of $P$ (that is the bundle over $X$ whose fiber of $x\in X$ is the range of the projection operator $P(x)$ in $\C[G]$) is a standard $G$-bundle in the sense of Section~\ref{vb-sec}.  In fact every standard $G$-bundle is obtained in this way, which explains our interest in $\mathcal C (X,G)$.
In fact we are even more interested in the following $C^*$-algebra completion of $\mathcal C (X,G)$.

\begin{definition}
 Let $X$ be a $G$-compact proper $G$-space.
Denote by $C^*(X,G)$ the $C^*$-algebra of $G$-equivariant, continuous functions from $X$ into the compact operators on $\ell^2(G)$. 
\end{definition}

 \begin{remark}
The $C^*$-algebra $C^*(X,G)$ is isomorphic to the crossed product $C^*$-algebra $C_0(X)\rtimes G$.  
 If $E^{g_1,g_2}$ denotes the matrix with $1$ in entry $(g_1,g_2)$ and zero in every other entry, then the formula $$f\,[g]
\mapsto \sum_h h(f)E^{h,hg}$$ gives an isomorphism from $C_0(X)\rtimes G$ to $C^*(X,G)$, and the formula
$$F\mapsto  \sum_{g\in G} F_{e,g}\, [g] $$
gives its inverse.   Since the action of $G$ on $X$ is proper, the maximal and reduced crossed products are equal. Indeed there is a unique $C^*$-algebra completion of the $*$-algebra $\mathcal C(X,G)$.\end{remark}

 \begin{lemma}
 \label{bijections-lemma}
 Assume that $G$ is infinite and $X$ is a $G$-compact proper $G$-space. The correspondence between projections in $\mathcal C (X,G)$ and their ranges induces bijections among the following sets:
\begin{enumerate}[\rm (a)]
\item Equivalence classes of projections in $\mathcal C (X,G)$.
\item Equivalence classes of projections in $C^*(X,G)$.
\item Isomorphism classes of standard $G$-bundles on $X$.
\item Isomorphism classes of hermitian $G$-bundles on $X$.
\end{enumerate}
If $X$ is a $G$-compact proper $G$-manifold, then there is in addition a bijection with 

\smallskip

\noindent{\rm (e)}\,\, Isomorphism classes of  smooth hermitian $G$-bundles on $X$.
\end{lemma}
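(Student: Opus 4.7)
The plan is to establish the chain of bijections (d) $\leftrightarrow$ (c) $\leftrightarrow$ (a) $\leftrightarrow$ (b), and then append (d) $\leftrightarrow$ (e) in the manifold case. Steps (c) $\leftrightarrow$ (d) and (a) $\leftrightarrow$ (c) are essentially bookkeeping. Every standard $G$-bundle is Hermitian via the canonical inner product on $\C[G]$ that makes $G$ orthonormal; conversely, the partition-of-unity argument recorded after Definition~\ref{standard-def} shows that every Hermitian $G$-bundle is isomorphic to a standard one, and any two isometric embeddings of a Hermitian bundle into $X\Times\C[G]$ differ by a partial isometry in $\mathcal C (X,G)$. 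For (a) $\leftrightarrow$ (c), the observation immediately preceding the lemma gives the map $P \mapsto \operatorname{range}(P)$; its inverse sends a standard $G$-bundle $E$ to the fiberwise orthogonal projection onto $E_x \subseteq \C[G]$, which lies in $\mathcal C (X,G)$ because, over any compact $K \subseteq X$, the projection takes values in $M[S]$ for an appropriate finite $S$. A bundle isomorphism $\Phi\colon E_1 \to E_2$ extends by zero on the orthogonal complement to a partial isometry $V \in \mathcal C (X,G)$ with $V^*V = P_1$ and $VV^* = P_2$, so Murray--von Neumann equivalence matches bundle isomorphism.

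The step (a) $\leftrightarrow$ (b) is where the real work sits, and it is the main obstacle. The map is induced by the dense inclusion $\mathcal C (X,G) \hookrightarrow C^*(X,G)$. For surjectivity, given a projection $Q \in C^*(X,G)$ I would pick a self-adjoint approximant $a \in \mathcal C (X,G)$ with $\|Q - a\| < 1/4$; the spectrum of $a$ is then disjoint from $\{1/2\}$, so the spectral projection $Q' = \chi_{[1/2,\infty)}(a) \in C^*(X,G)$ exists and is Murray--von Neumann equivalent to $Q$ by the standard $C^*$-algebraic lemma. The fiberwise range of $Q'$ is a finite-rank continuous Hermitian $G$-bundle, hence by the previous step is isomorphic to the range of some projection in $\mathcal C (X,G)$. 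For injectivity, Theorem~\ref{proper-bundle-thm} supplies a standard $G$-bundle whose defining projection $P \in \mathcal C (X,G)$ dominates both $P_1$ and $P_2$; I would then carry out a polar-decomposition approximation within the unital corner $P\, C^*(X,G)\, P$, whose intersection with $\mathcal C (X,G)$ is a dense $*$-subalgebra locally consisting of finite matrices, to correct an implementing $V \in C^*(X,G)$ to a partial isometry in $\mathcal C (X,G)$. The non-unitality of $C^*(X,G)$ is what makes this delicate, and Theorem~\ref{proper-bundle-thm} (equivalently, the existence of an approximate identity of projections) is precisely what reduces the problem to a unital finite-dimensional situation where ordinary functional calculus and linear algebra apply.

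For (d) $\leftrightarrow$ (e) in the manifold case, I would work with the smooth subalgebra $\mathcal C^\infty(X,G) \subseteq \mathcal C (X,G)$ of standard functions whose matrix entries are smooth. Given a continuous projection $P \in \mathcal C (X,G)$, a $G$-equivariant smooth partition of unity lets me approximate the matrix entries of $P$ by smooth ones, producing a self-adjoint $a \in \mathcal C^\infty(X,G)$ with $\|a - P\|$ as small as I please. The polynomial $3a^2 - 2a^3$ stays in $\mathcal C^\infty(X,G)$ and lies closer to a projection; iterating this Newton-type flow converges to a smooth projection Murray--von Neumann equivalent to $P$. The same smoothing argument applied to a partial isometry between two smooth projections, this time in the corner where both projections sit, gives uniqueness of the smooth structure up to smooth isomorphism, completing the final bijection.
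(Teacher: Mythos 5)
The paper collapses the two hard steps, (a)$\leftrightarrow$(b) and (d)$\leftrightarrow$(e), to a single structural observation that you never state: $\mathcal C(X,G)$ is a dense, holomorphically closed (spectrally invariant) $*$-subalgebra of $C^*(X,G)$, and in the manifold case so is the smooth version; for such inclusions the general principle, cited from \cite{MR1656031}, says the inclusion already induces a bijection on Murray--von Neumann classes of projections. Your treatment of (a)$\leftrightarrow$(c)$\leftrightarrow$(d) agrees with the paper's (via Lemma~\ref{standard-bundle-lemma}), but for (a)$\leftrightarrow$(b) and (d)$\leftrightarrow$(e) you attempt to re-derive the holomorphic-closure principle by hand, and this is where the gaps are.

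For (a)$\leftrightarrow$(b), the surjectivity argument is serviceable, but injectivity is only gestured at. Correcting an implementing $V\in C^*(X,G)$ with $V^*V=P_1$, $VV^*=P_2$ to a partial isometry in $\mathcal C(X,G)$ is precisely the content of the holomorphic-closure theorem: one approximates $V$ by $v\in P_2\,\mathcal C(X,G)\,P_1$ and then forms $w = v(v^*v)^{-1/2}$ in the unitized corner, and the whole point is to know that $(v^*v)^{-1/2}$ stays in the subalgebra. That is exactly spectral invariance of $\mathcal C(X,G)$ in $C^*(X,G)$; without naming and establishing that property, your "polar-decomposition approximation in the corner" is not an argument but a restatement of the problem. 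Your observation that the approximate identity of projections reduces matters to a unital corner is correct and is half of the reason the inclusion is holomorphically closed, but the other half (spectral invariance of the corner inclusion) is missing.

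For (d)$\leftrightarrow$(e) there is a genuine gap: the iteration $a\mapsto 3a^2-2a^3$ keeps you in $\mathcal C^\infty(X,G)$ at every finite stage and converges in the $C^*$-norm, but $C^*$-norm limits of smooth elements need not be smooth. You would need convergence in the Fr\'echet topology of $\mathcal C^\infty(X,G)$, i.e.\ uniform control of all derivatives along the iteration, and you do not address this. The standard way around it, and what the paper's appeal to holomorphic closure encodes, is to produce the spectral projection in one step as a Cauchy integral
\begin{equation*}
\chi_{[1/2,\infty)}(a)\;=\;\frac{1}{2\pi i}\oint_{\Gamma}(z-a)^{-1}\,dz ,
\end{equation*}
observe that spectral invariance puts $(z-a)^{-1}$ in the unitization of the smooth subalgebra, and note that the contour integral converges in the Fr\'echet topology. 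The same issue affects your smoothing of the implementing partial isometry in the final paragraph. In short: the architecture of your reduction is right, but both of the genuinely non-formal bijections rely on the holomorphic-closure/spectral-invariance input, and your sketch substitutes plausible-looking approximations for the one lemma that actually makes them work.
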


\begin{proof}
Recall that two projections $P$ and $Q$ in a $*$-algebra are equivalent if and only if there is an element $U$ such that $U^*U=P$ and $UU^* = Q$.  The inclusion of 
 $\mathcal C (X,G)$ into $C^*(X,G)$  is  a simple example of a \emph{holomorphically closed subalgebra}, and as a result the  inclusion  induces a bijection between the sets in (a) and (b).  Compare \cite[Sections 3 and 4]{MR1656031}. The sets in (a) and (c) are in bijective correspondence virtually by definition.  The sets in (c) and (d) are in bijection thanks to Lemma~\ref{standard-bundle-lemma}, which in particular shows that every $G$-bundle is isomorphic to a standard bundle, if $G$ is infinite.  
 
 If $X$ is a manifold, then the inclusion of the smooth functions in $\mathcal C (X,G)$ into $C^*(X,G)$ is also a holomorphically closed subalgebra, and this gives the final part of the lemma since equivalence classes projections in the algebra of smooth functions correspond to isomorphism classes of (the obvious concept of) smooth standard $G$-bundles. 
 \end{proof}

\begin{remark}
If $G$ is finite, then the lemma remains true if $\mathcal C (X,G)$ and $C^*(X,G)$ are replaced by direct limits of matrix algebras over themselves.
\end{remark}

\begin{theorem}
The $C^*$-algebra $C^*(X,G)$ has an approximate identity consisting of projections.
\end{theorem}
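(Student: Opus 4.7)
The strategy is to exhibit an approximate identity as the directed set $\mathcal{P}$ of all projections in $\mathcal{C}(X,G)\subseteq C^*(X,G)$, ordered by $P\le Q \iff PQ = P$. Directedness of $\mathcal{P}$ is immediate from the preceding material: two such projections $P_1, P_2$ correspond by Lemma~\ref{bijections-lemma} to standard $G$-bundles $E_1, E_2$; Theorem~\ref{proper-bundle-thm} supplies a standard $G$-bundle $E$ containing both $E_1$ and $E_2$, and the projection onto $E$ dominates both $P_1$ and $P_2$. So the substantive task is to prove the approximation property: for every $a\in C^*(X,G)$ and every $\varepsilon>0$ there exists $P_0\in\mathcal{P}$ with $\|Pa-a\|,\|aP-a\|<\varepsilon$ for all $P\ge P_0$.

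Since $\|P\|\le 1$ for any projection and $\mathcal{C}(X,G)$ is norm dense in $C^*(X,G)$, it suffices to prove the following local-unit property: for every $F\in\mathcal{C}(X,G)$ there is a projection $P_F\in\mathcal{C}(X,G)$ with $P_F F = F P_F = F$. Given this, for $a\in C^*(X,G)$ and $\varepsilon>0$ we choose $F\in\mathcal{C}(X,G)$ with $\|a-F\|<\varepsilon/2$ and set $P_0 = P_F$; for $P\ge P_0$ one has $PP_0 = P_0$, so $PF = (PP_0)F = P_0 F = F$, and
\[
\|Pa-a\|\le \|P(a-F)\|+\|PF-F\|+\|F-a\|\le 2\|a-F\|<\varepsilon,
\]
and similarly on the right.

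The construction of $P_F$ proceeds as follows. Using $G$-compactness of $X$, fix a compact set $K\subseteq X$ with $G\cdot K = X$. The standardness of $F$ together with $G$-equivariance guarantees a finite subset $S\subseteq G$ such that for every $x\in K$ the matrix $F(x)$ has all nonzero entries indexed by $S\times S$, equivalently $F(x)\in M[S]\subseteq M[G]$ on $K$. Apply Lemma~\ref{lem:compact_emb_standard} to obtain a standard $G$-bundle $E$ on $X$ containing $K\times\mathbb{C}[S]$, and let $P_F(x)$ be the orthogonal projection of $\mathbb{C}[G]$ onto $E_x$. By Lemma~\ref{bijections-lemma} this $P_F$ is a projection in $\mathcal{C}(X,G)$, and hence a projection in $C^*(X,G)$.

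It remains to check $P_F F = F P_F = F$. For $x\in K$: the range of $F(x)$ lies in $\mathbb{C}[S]\subseteq E_x$, so $P_F(x)F(x) = F(x)$; and because the column support of $F(x)$ lies in $S$, the operator $F(x)$ annihilates $E_x^{\perp}\subseteq \mathbb{C}[G\setminus S]$, giving $F(x)P_F(x) = F(x)$. For general $x = gy$ with $y\in K$, the $G$-equivariance of both $F$ and $P_F$ propagates the identity, via $P_F(x)F(x) = g\cdot(P_F(y)F(y)) = g\cdot F(y) = F(x)$, and symmetrically on the right. The main obstacle is the bookkeeping that reduces the global question to a local one on the compact slice $K$; this reduction is made possible by the combination of $G$-compactness and the L\"uck--Oliver-style existence Lemma~\ref{lem:compact_emb_standard}, which is the technical heart of the argument.
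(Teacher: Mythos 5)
Your proof is correct and follows essentially the same route as the paper's: both reduce the claim to finding, for each $F$ in the dense $*$-subalgebra $\mathcal C(X,G)$, a projection in $\mathcal C(X,G)$ acting as a two-sided unit for $F$, and both obtain that projection from Lemma~\ref{lem:compact_emb_standard} by enclosing the (finitely supported on a compact slice) matrix entries of $F$ in a standard $G$-bundle. The paper phrases the construction more tersely (``the images of $F_j$ and $F_j^*$ are contained in a standard $G$-bundle'') and gets directedness for free by applying the local-unit claim to finite families, but the underlying argument is identical to yours.
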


\begin{proof}
We claim that for  every finite set of elements $F_1,\dots , F_n$ in $\mathcal C (X,G)$ there is a projection $P$ in $\mathcal C (X,G)$ such that 
$$
F_j = PF_j = F_j P
$$
for all $j=1,\dots n$.
Indeed  the orthogonal projection  onto any standard $G$-bundle is a
projection  in   $\mathcal C (X,G)$ (and conversely). But
  on any compact set $K$, all but finitely many matrix coefficients of the
  $F_j\vert_{K}$ are zero. So by Lemma \ref{lem:compact_emb_standard} the images
  of $F_j$ and $F_j^*$  are contained in a (standard) $G$-bundle. The
  theorem follows  since $\mathcal C (X,G)$ is dense in $C^*(X,G)$. 
\end{proof}

\begin{theorem}
\label{bijections-thm} Let $X$ be a $G$-compact proper $G$-space.
The bijections in Lemma~\ref{bijections-lemma} determine a natural  isomorphism between the Grothendieck group of $G$-bundles on $X$  and  the $K_0$-group of the $C^*$-algebra $C^*(X,G)$.
\end{theorem}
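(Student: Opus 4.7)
The plan is to upgrade the bijection in Lemma~\ref{bijections-lemma} between isomorphism classes of $G$-bundles and Murray--von Neumann equivalence classes of projections in $C^*(X,G)$ to a group isomorphism between the Grothendieck group and $K_0$. First, I would construct the forward map $\Phi \colon K^0_G(X) \to K_0(C^*(X,G))$ by sending a $G$-bundle $E$ to the class $[p_E]$, where $p_E$ is any projection in $\mathcal{C}(X,G) \subseteq C^*(X,G)$ whose range is a standard $G$-bundle isomorphic to $E$.  Existence of such a projection, and uniqueness of its equivalence class, follow from Lemma~\ref{bijections-lemma}.

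To check additivity under direct sums, I would use Lemma~\ref{simple-orth-lemma} to represent $E_1 \oplus E_2$ by the sum of two orthogonal projections $p_1 + p_2$ whose ranges are standard $G$-bundles isomorphic to $E_1$ and $E_2$, respectively; then $\Phi([E_1] + [E_2]) = [p_1] + [p_2] = [p_1 + p_2] = \Phi([E_1\oplus E_2])$ in $K_0$.  The universal property of the Grothendieck group then extends $\Phi$ to the desired group homomorphism.

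To construct an inverse, I would invoke the stabilization described in Remark~\ref{modification-remark}.  Every element of $K_0(C^*(X,G))$ is represented by a projection in $M_n(C^*(X,G))$ for some $n$, and since $M_n(\mathcal{K}(\ell^2(G))) \cong \mathcal{K}(\ell^2(G_n))$ where $G_n = G \sqcup \cdots \sqcup G$ ($n$ copies) is a left $G$-set, such projections are naturally interpreted as projections in the $G_\infty$-version of $C^*(X,G)$ from Remark~\ref{modification-remark}.  By the $G_\infty$-variant of Lemma~\ref{bijections-lemma}, each such projection has a range which is a $G$-bundle on $X$, and assigning to $[p]$ the class of its range bundle descends to a homomorphism $\Psi \colon K_0(C^*(X,G)) \to K^0_G(X)$.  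On the level of representatives, $\Phi$ and $\Psi$ are implemented by the mutually inverse correspondences of Lemma~\ref{bijections-lemma}, so they are inverse isomorphisms; naturality with respect to equivariant maps is built into the explicit formulas on both sides.

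The main obstacle I expect is the matrix-stabilization step: $K_0$ is defined using $M_\infty$, whereas $G$-bundles correspond to projections in the unstabilized algebra.  The device of replacing $G$ by $G_\infty$ in the definition of standard bundle is exactly what bridges this gap, absorbing matrix multiplicity into additional copies of the regular representation while leaving the underlying equivariant bundle theory on $X$ unchanged.  Once this identification is in place, the remainder of the argument is essentially formal manipulation of the correspondence in Lemma~\ref{bijections-lemma}.
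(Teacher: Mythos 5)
Your forward map $\Phi$ is fine, and the $G_\infty$ device for absorbing matrix amplification is the right way to handle projections in $M_n(C^*(X,G))$. The gap is in the construction of the inverse: you assert that ``every element of $K_0(C^*(X,G))$ is represented by a projection in $M_n(C^*(X,G))$ for some $n$,'' but this is precisely the point that requires justification. For a general non-unital $C^*$-algebra $A$, the natural map from the Grothendieck group of the monoid $V(A)$ of Murray--von Neumann equivalence classes of projections in $M_\infty(A)$ into $K_0(A)$ need not be surjective (for instance $V(C_0(\R^2))$ is trivial while $K_0(C_0(\R^2))\cong\Z$); elements of $K_0(A)$ are differences of classes of projections over the \emph{unitization} $A^+$, and reducing to projections over $A$ itself is not automatic.

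The paper's proof closes exactly this gap by first proving, as a separate theorem, that $C^*(X,G)$ has an approximate identity consisting of projections, and then invoking the standard $K$-theoretic fact that for such an $A$ the map $G\bigl(V(A)\bigr)\to K_0(A)$ is an isomorphism. Only then does Lemma~\ref{bijections-lemma} (together with the matrix-stabilization bookkeeping you describe) finish the identification with the Grothendieck group of $G$-bundles. You have identified one of the two essential inputs (matrix stabilization via $G_\infty$) but omitted the other; without the approximate-unit-of-projections step, the well-definedness and surjectivity of your $\Psi$ are unproved.
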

 
 \begin{proof}
 If $A$ is any $C^*$-algebra with an approximate unit consisting of projections, then the natural map from the Grothendieck group of projections in matrix algebras over $A$ into $K_0(A)$ is an isomorphism.  So the theorem follows from the previous result.
 \end{proof}

\begin{definition}
If  $X$ is a $G$-compact proper $G$-space, and $j\in \Z/2\Z$, then denote by $K_G^j(X)$ the $K_j$-group of the $C^*$-algebra $C^*(X,G)$.  If $Y$ is a $G$-invariant closed subset of $X$, then denote by $K_G^j(X,Y)$ the $K_0$-group of the ideal in $C^*(X,G)$ consisting of functions that vanish on $Y$.
\end{definition}

 By the above, $K^0_G(X)$ is the Grothendieck group of isomorphism classes of $G$-bundles on $X$.  

The relative groups $K^j_G(X,Y)$ satisfy excision   (of the strongest possible type, that $K^j_G(X,Y)$ depends only on $X\setminus Y$).  Elementary $K$-theory provides functorial coboundary maps $$\partial \colon K_G^{j}(Y)\longrightarrow K_G^{j+1}(X,Y),$$
and these give the groups $K^j_G(X,Y)$ the structure of   a $\Z/2\Z$-graded cohomology theory on $G$-compact proper $G$-spaces, in the sense that that they  fit into functorial long exact sequences 
$$
 \xymatrix{
  \dots 
  \ar[r] &
  K_G^{j}(X)
  \ar[r] & 
   K_G^{j}(Y)
   \ar[r] &
    K_G^{j+1}(X,Y)
    \ar[r]& 
     K_G^{j+1}(X)
    \ar[r]& 
    \dots
 }  
$$
Although we have accessed this fact using $C^*$-algebra $K$-theory, this is also the main result of \cite{MR1838997}.

We conclude by reviewing the Gysin  maps in equivariant $K$-theory that we shall need in the next section.  
Let $E$ be a  complex $G$-bundle over a $G$-compact proper $G$-manifold $M$.  As we noted earlier, $E$ carries a canonical $\Spin^c$-structure. Form the sphere bundle $\widehat M$ of the real bundle $\R\oplus E$, as in Section~\ref{geometric-sec}.  The manifold  $M$ is equivariantly embedded as a retract in $\widehat M$ using the section $$M\ni m\mapsto (1,0)\in \R\oplus E_m,$$ and associated to the embedding is a short exact sequence of $K$-theory groups 
$$
0\longrightarrow K^0_G (\widehat M,M)\longrightarrow K^0_G(\widehat M) \longrightarrow K^0_G (M)\longrightarrow 0.
$$
Let $F$ be the complex $G$-bundle  over $\widehat M$ that we defined in
Section~\ref{geometric-sec}, and denote by $F_0$ the complex $G$-bundle
obtained by restricting $F$ to $M$, then pulling back the restriction to $
\widehat M$ using the projection from $\widehat M$  down to $M$.  Because of the above exact sequence the
difference  $[F] - [F_0]$   defines an element of $K_G^0(\widehat M, M)$.  The relative group
is a module over $K_G^0(M)$ {via pullback and tensor product}, and multiplication against $[F]-[F_0]$ gives the \emph{Thom homomorphism}
$$
  K_G^0(M)\longrightarrow K_G^0(\widehat M, M). $$
The complement $\widehat M \setminus M$ identifies with the total space of $E$ via the map
$$
E\ni e \mapsto \tfrac{1}{1+ \|e\|^2 } \left ( 1, e\right )\in \R\oplus E.
$$
  If $\iota \colon  M \to N$ is an embedding of $G$-compact proper $G$-manifolds, and if the normal bundle to the embedding is identified with $E$, then the \emph{Gysin map} $$\iota_!\colon K^0_G(M )\to K^0_G (N)$$
 is the composition 
$$
\xymatrix{
K^0_G(M)\ar[r]^-{\operatorname{Thom}} & K^0_G (\widehat M,M) \ar[r]^-\cong & K^0_G(N,N') \ar[r] & K^0_G(N) ,
}
$$
where $N'\subseteq N$ is the complement of a tubular neighborhood   and the middle map is given by excision and the identification of the tubular neighborhood with $E$.  

The Gysin map is functorial for compositions of embeddings.  It is well-defined for embeddings of manifolds with boundary, as long as the embedding is transverse to the boundary of $N$, and carries the boundary of $M$ into the boundary of $N$.

\section{The Technical Theory}

 In this section we shall construct the homology groups $k^G_*(X)$ that were described in the introduction.  They are obtained as direct limits of  certain bordism groups.

\begin{definition}
\label{ZEmfld-def}
  Let $Z$ be a proper $G$-space and let $E$ be a $G$-bundle on $Z$.
A \emph{stable $(Z,E)$-manifold} is  a $G$-compact proper $G$-manifold $M$ (possibly with boundary)  together with an equivalence class of pairs $(h,\varphi)$, where:
\begin{enumerate}[\rm (a)]
\item $h\colon M \to Z$ is a continuous and $G$-equivariant map.
\item $\varphi$  is an isomorphism of topological  real $G$-bundles
$$
\varphi \colon   \R^r\oplus TM\longrightarrow \R^s\oplus h^*E    , 
$$
for some   $r,s\ge 0$. Here   $\R^r$ and $\R^s$ denote the trivial bundles of ranks $r$ and $s$ (with trivial action of  $G$ on the fibers).
\end{enumerate}
The equivalence relation is stable homotopy: $(h_0,\varphi_0)$ and $(h_1,\varphi_1)$ are equivalent if there is a homotopy 
$h\colon M\times[0,1]\to Z$ between $h_0$ and $h_1$ and an isomorphism of real $G$-bundles over $M\times[0,1]$ 
$$\varphi\colon \R^r\oplus TM \longrightarrow \R^s \oplus h^* E ,
$$
with $r\ge r_0, r_1$, that restricts to $ I_{r-r_0}\oplus \varphi_0$ and $I_{r-r_1}\oplus \varphi_1$ at the two endpoints of $[0,1]$.
\end{definition}
 
If $M$ is a stable $(Z,E)$-manifold with boundary, then its boundary may be equipped with a stable $(Z,E)$-structure by forming the composition
$$
\xymatrix{
\R^r\oplus \R\oplus T\partial M \ar[r]_\cong &
\R^r\times TM\big\vert_{\partial M} \ar[r]_-{\varphi} &  \R^s \oplus f^*E,} 
$$
in which $\R\oplus T\partial M$ is identified with  $TM\vert _{\partial M}$ by the ``exterior normal first'' convention.

 \begin{definition}
Let $X$ be a  $G$-finite proper $G$-$CW$-complex and let $Y$ be a $G$-subcomplex of $X$.   For  $j = 0,1,2,\dots$ we define $\Omega^{(Z,E)}_j(X,Y)$ to be the group of equivalence classes of triples 
$(M,a,f)$, where
\begin{enumerate}[\rm (a)]
\item $M$ is a smooth, proper, $G$-compact $G$-manifold of dimension $j$ with a stable  $(Z,E)$-structure. 
\item $a$ is a class in the group $K^0_G(M)$.
\item $f\colon M\to X$ is a continuous, $G$-equivariant map  such that    $f[\partial M]\subseteq Y$.
\end{enumerate}
The equivalence relation is the obvious notion of bordism.\end{definition}

\begin{remark}
Of course, the relation of bordism is arranged to incorporate the   classes $a\in K^0_G(M)$, so that if $(M,a,f)$ is the boundary of $(W,b,g)$, then not only do we have that $M=\partial W$ and $f=g\vert _{M}$, but also the restriction map $K^0_G(W)\to K^0_G(M)$ takes $b$ to $a$.   One approach to the concept of bordism between manifolds with boundary is reviewed in \cite[Definition~5.5]{MR2330153}.
\end{remark}

The sets $\Omega^{(Z,E)}(X,Y)$ are abelian groups. The  group operation is
given by disjoint union  and the additive inverse of $(M,a,f)$ is $(-M,a,f)$.
Here the \emph{opposite}   $-M$ is obtained  by composing the bundle
isomorphism $\varphi$ with an orientation-reversing automorphism of
$\R^s$. (See Lemma~\ref{fried-egg-lemma} for another description of the
inverse.)

The  groups $\Omega^{(Z,E)}_j(X,Y)$ constitute  a homology theory on
$G$-finite proper $G$-$CW$-complexes. Homotopy invariance follows from the
bordism relation. The boundary   maps 
$$\partial \colon \Omega_j^{(Z,E)}(X,Y)\longrightarrow \Omega^{(Z,E)}_{j-1}(Y)$$
  take $(M,a,f)$ to $(\partial M, a\vert _{\partial M}, f\vert_{\partial M})$.  They fit into  sequences 
  \begin{equation}\label{eq:LES}
 \xymatrix{
  \dots 
  \ar[r] &
  \Omega_j^{(Z,E)}(X)
  \ar[r] & 
   \Omega_j^{(Z,E)}(X,Y)
   \ar[r] &
    \Omega_{j-1}^{(Z,E)}(Y)
    \ar[r]& 
     \Omega_{j-1}^{(Z,E)}(X)
    \ar[r]& 
    \dots
 } . 
\end{equation}
whose exactness follows from direct manipulations with cycles, as follows.  A cycle $(M ,a,f)$ for $\Omega_j^{(Z,E)}(Y)$, when mapped to $\Omega^{(Z,E)}_j(X,Y)$, represents the zero class since it is the boundary of $(M\times [0,1],a,f\circ pr_M)$.  Conversely if a cycle for  $\Omega_j^{(Z,E)}(X)$ is a boundary in $\Omega^{(Z,E)}_f(X,Y)$, then that boundary is a bordism from the given cycle to a cycle for $\Omega_j^{(Z,E)}(Y)$.  So the sequence is exact at $\Omega_j^{(Z,E)}(X)$.

Exactness at $\Omega_{j-1}^{(Z,E)}(Y)$ is evident, as is the fact  that the composition of the two maps through  $\Omega_j^{(Z,E)}(X,Y)$ is zero.  
It remains to complete the proof of    exactness at $\Omega_j^{(Z,E)}(X,Y)$.  If a cycle $(M,a,f)$ for  $\Omega_j^{(Z,E)}(X,Y)$ maps to zero in $\Omega_{j-1}^{(Z,E)}(Y)$, then one can glue to $(M,a,f)$ a null bordism 
for its image  in  $\Omega_{j-1}^{(Z,E)}(Y)$ to lift $(M,a,f)$ to a cycle for 
$\Omega_j^{(Z,E)}(X)$.  The glued manifold carries and evident   stable $(Z,E)$-structure.  As for the lifting of the $K$-theory class $a$, we use the fact that  if $a_1$ and $a_2$ are equivariant $K$-theory classes on manifolds $M_1$ and
$M_2$ that restrict to a common class on the a common boundary, then there is a
$K$-theory class on $M_1\cup M_2$ that restricts to $a_1$ and $a_2$.  This follows from
the Mayer-Vietoris sequence for equivariant $K$-theory, and therefore from the
results of L\"uck and Oliver reviewed in Sections~\ref{vb-sec} and
\ref{cstar-sec}.

A map of pairs of $G$-$CW$-complexes ${\phi\colon }(X_1,Y_1) 
\to (X_2,Y_2)$ that is a homeomorphism from $X_1\setminus Y_1  $ to
$X_2\setminus Y_2$ induces an isomorphism on relative groups  as follows. The
open cells in $X_1\setminus Y_1$ and $X_2\setminus Y_2$ correspond to each
other. By induction on the dimension of cells we can construct open
$G$-invariant neighborhoods $U_1$ and $U_2$ of $Y_1$ and $Y_2$, respectively,
with $G$-equivariant deformation retractions $U_i\to Y_i$. Using the long
exact sequence \eqref{eq:LES} we can replace $Y_i$ with $U_i$. Given a cycle
$(M,a,f)$ for $\Omega_j^{(Z,E)}(X_2,U_2)$ we can, using a $G$-invariant collar of $M$,
replace it by a cycle which omits $Y_2$ and therefore lifts to
$(X_1,U_1)$. Similarly, we can achieve that a bordism for $(X_2,U_2)$ of such
normalized cycles avoids $Y_2$ nad therefore lifts to 
$(M_1,U_1)$. Together, this implies that $\phi$ induces an isomorphism
$\Omega_j^{(Z,E)}(X_1,Y_1)\to \Omega_j^{(Z,E)}(X_2,Y_2)$.

 As they stand, the bordism groups $\Omega^{(Z,E)}_j(X,Y)$ are rather far from equivariant $K$-homology groups, most obviously because they are not $2$-periodic.   We shall obtain the technical groups $k^G_j(X,Y)$ by simultaneously forcing periodicity and removing dependence of the bordism groups on the pair $(Z,E)$.

Let  $M$ be a stable $(Z,E)$-manifold with structure maps $h$ and $\varphi$,
as in Definition~\ref{ZEmfld-def} and let $F$ be a complex hermitian
$G$-bundle on $Z$ of rank $k$.  The pullback of $F$ to $M$   has  a unique
$G$-invariant smooth structure by Lemma \ref{bijections-lemma}, and so we may form the sphere bundle $S(\R\oplus h^*F)$, which is a $G$-compact proper $G$-manifold.  It is also a stable $(Z,E\oplus F)$-manifold. Indeed if $B(\R\oplus h^*F)$ is the unit ball bundle, and if $p$ is the projection  to $M$, then 
$$  TB(\R\oplus h^*F )  \cong  \R\oplus p^*T^*M\oplus h^*F  $$
(once a complement to the vertical tangent bundle is chosen).  So we obtain an isomorphism
$$\begin{aligned}
 \R^r\oplus TB(\R\oplus h^*F )   &\cong  \R^r \oplus  \R\oplus p^*T^*M\oplus p^* h^*F  \\
  &\cong  \R^s \oplus  \R\oplus p^*h^* E \oplus p^*h^* F  \end{aligned}
$$
using the given stable $(Z,E)$-structure on $M$.  We can then equip the sphere bundle with the stable $(Z,E\oplus F)$-structure it inherits as the boundary of the ball bundle.

Suppose now that $(M,a,f)$ is a cycle for the bordism group $ \Omega^{(Z,E)}_j(X,Y) $.  We can form from it the cycle $(\widehat M, \iota_! (a), f\circ \pi )$ for the group 
$ \Omega^{(Z,E\oplus F)}_{j+2k}(X,Y) $, where:
\begin{enumerate}[\rm (a)]
\item $\widehat M$ is the sphere bundle for $\R\oplus h^* F$ with the stable $(Z,E\oplus F)$-structure just described.
\item  $\iota \colon M\to \widehat M$ is the inclusion of $M$ into the sphere
  bundle given by the formula $m\mapsto (1,0)\in \R\oplus F_{h(m)}$ and
  $\iota_!\colon K_G^0(M)\to K_G^0(\widehat M)$ is the Gysin
  map, as described at the end of Section~\ref{cstar-sec}.
\item $\pi $ is  the projection from  $\widehat M$  to $M$.
\end{enumerate}
Since this construction may also be carried out on bordisms between  manifolds, we obtain a well-defined map on bordism classes. 

\begin{definition}
Let $k=\dim_{\C} (F)$.  Denote by 
$$
\beta^F\colon \Omega^{(Z,E)}_j(X,Y) \longrightarrow \Omega^{(Z,E\oplus F)}_{j+2k}(X,Y)$$
the map determined by the above construction.
\end{definition}

\begin{lemma}
If $F_1$ and $F_2$ are $G$-bundles on $Z$ of ranks $k_1$ and $k_2$ respectively, then 
$$
\beta^{F_2}\circ \beta^{F_1}=\beta^{F}\colon 
 \Omega^{(Z,E)}_j(X,Y) \longrightarrow \Omega^{(Z,E\oplus F)}_{j+2k}(X,Y),$$
 where $F=F_1\oplus F_2$ and $k=k_1+k_2$.
\end{lemma}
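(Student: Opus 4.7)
The plan is to show that for any cycle $(M,a,f)$ representing a class in $\Omega^{(Z,E)}_j(X,Y)$, the cycles $\beta^{F_2}\beta^{F_1}(M,a,f)$ and $\beta^F(M,a,f)$ are bordant in $\Omega^{(Z,E\oplus F)}_{j+2k}(X,Y)$. Unwinding the definitions, and writing $\iota_1\colon M\hookrightarrow \widehat M_1$, $\iota_2\colon \widehat M_1\hookrightarrow \widehat{\widehat M}$, and $\iota\colon M\hookrightarrow \widehat M$ for the canonical sections supplied by the $(1,0)$-embedding, the iterated cycle is $(\widehat{\widehat M},\iota_{2,!}\iota_{1,!}(a),f\circ\pi_1\circ\pi_2)$ while the single cycle is $(\widehat M,\iota_!(a),f\circ\pi)$.

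First I would reconcile the $K$-theory classes. By the functoriality of the Gysin map for compositions of embeddings recorded at the end of Section~\ref{cstar-sec}, one has $\iota_{2,!}\iota_{1,!}(a)=(\iota_2\circ\iota_1)_!(a)$; the composed section $\iota_2\circ\iota_1\colon M\hookrightarrow \widehat{\widehat M}$ has normal bundle $h^*F_1\oplus h^*F_2=h^*F$, which matches the normal bundle of $\iota\colon M\hookrightarrow \widehat M$. Hence on both sides the $K$-class is obtained from $a$ via a Gysin map for a codimension-$2k$ embedding of $M$ with normal bundle $h^*F$, and the remaining task is to produce a stable $(Z,E\oplus F)$-bordism between the two total manifolds that respects the embedded copy of $M$.

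Next I would construct the bordism. Consider the iterated closed ball bundle $W=B(\R\oplus \pi_1^*h^*F_2)$ over $B_1=B(\R\oplus h^*F_1)$; fiberwise over $M$ this is $D^{2k_1+1}\times D^{2k_2+1}$, which after the standard corner-smoothing is diffeomorphic to $D^{2k+2}$. The codimension-two corner stratum of $\partial W$ is precisely $\widehat{\widehat M}$, and after smoothing, $\partial W$ becomes the sphere bundle $S(\R^2\oplus h^*F)$ over $M$. Splicing $W$ with a collar on the ball bundle $B(\R\oplus h^*F)$ along the equatorial copy of $\widehat M$ inside $S(\R^2\oplus h^*F)$ produces a compact manifold $V$ whose remaining boundary components are $\widehat{\widehat M}$ and a copy of $-\widehat M$. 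The embedded $M\subset V$ coming from the common zero section has normal bundle $h^*F$, so the relative Gysin construction applied to $a$ gives a $K$-class on $V$ restricting on the two boundary components to the required $(\iota_2\iota_1)_!(a)$ and $\iota_!(a)$, while the stable $(Z,E\oplus F)$-structure on $V$ is inherited from the canonical direct-sum decomposition of its tangent bundle.

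The main obstacle is executing the corner-smoothing and splicing $G$-equivariantly while tracking trivial summands and transporting the stable $(Z,E\oplus F)$-structure consistently across the two ends. Once the bordism $V$ is in place, the $K$-theoretic compatibility is a formal consequence of Gysin functoriality together with the Mayer--Vietoris property for equivariant $K$-theory established in Section~\ref{cstar-sec}.
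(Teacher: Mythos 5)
Your first paragraph (reconciling the $K$-theory classes via functoriality of the Gysin map for the composition $\iota_2\circ\iota_1\colon M\hookrightarrow\widehat{\widehat M}$, noting that its normal bundle is $h^*F$) is correct and is indeed part of what the paper does, though the paper packages it differently: it transports $a$ along an embedded $M\times[0,1]$ inside the bordism manifold and invokes Gysin functoriality at the end.

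The gap is in the bordism construction. You want a bordism between $\widehat{\widehat M}$ and $\widehat M$, which are both of dimension $j+2k$; the bordism must therefore have dimension $j+2k+1$, and fiberwise over $M$ it must be $(2k+1)$-dimensional, i.e.\ it should sit inside $B(\R\oplus h^*F)$. Your $W=B(\R\oplus\pi_1^*h^*F_2)$ is fiberwise $D^{2k_1+1}\times D^{2k_2+1}$, which has dimension $j+2k+2$ --- one too many. Its boundary $\partial W\cong S(\R^2\oplus h^*F)$ is $(j+2k+1)$-dimensional and does contain the corner stratum $\widehat{\widehat M}$ as a hypersurface (it is codimension one in $\partial W$, not two), but the smooth-sphere decomposition of $\partial(D^a\times D^b)$ is $S^{a-1}\times D^b\cup D^a\times S^{b-1}$, and neither piece is a bordism from the corner to the lower sphere $S^{a+b-2}$. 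Your ``splicing $W$ with a collar on $B(\R\oplus h^*F)$ along the equatorial copy of $\widehat M$'' is not dimensionally coherent as stated: $W$ has dimension $j+2k+2$, a collar of $\widehat M$ in $B(\R\oplus h^*F)$ has dimension $j+2k+1$, and gluing along a $(j+2k)$-dimensional manifold cannot produce a manifold with boundary $\widehat{\widehat M}\sqcup(-\widehat M)$ of the required dimension $j+2k+1$. So the object $V$ is not constructed.

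The paper stays in the correct dimension from the start: it embeds $S^{2k_1}\times S^{2k_2}$ directly as a hypersurface in the open unit ball of $\R\oplus F$ via the explicit formula
$$\bigl((s_1,f_1),(s_2,f_2)\bigr)\mapsto\tfrac16\bigl((2+s_1)s_2,\,f_1,\,(2+s_1)f_2\bigr),$$
and takes the region of the ball between this hypersurface and the boundary sphere $S^{2k}$ as the bordism $\widehat W$ (fiberwise this is the complement of a solid torus in the ball). It then embeds $M\times[0,1]$ into $\widehat W$ by $t\mapsto(\tfrac12(1+t),0,0)$, transversely to $\partial\widehat W$, and uses $j_!$ of (the homotopy-constant extension of) $a$ as the bordism $K$-class, invoking Gysin functoriality to identify the restrictions with $(\iota_2\iota_1)_!(a)$ and $\iota_!(a)$. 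If you want to run your higher-dimensional picture, you would need to identify a hemisphere of $S(\R^2\oplus h^*F)$ with $B(\R\oplus h^*F)$ and locate both the corner and the equator inside it disjointly --- at which point you have effectively recovered the paper's construction, so the detour through $W$ buys nothing.
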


\begin{proof}
 Let $c=(M,a,f)$ be a cycle for the bordism group $ \Omega^{(Z,E)}_j(X,Y)$.  
 The manifolds {$\widehat M_0$}, $\widehat M_1$ and $\widehat
 M_2$ obtained from $M$ by the modification processes underlying
{$\beta^{F_1}$,} $\beta^{F_2}\circ \beta^{F_1}$ and $\beta^{F}$ are 
 fiber bundles over $M$ whose fibers are the {spheres in $\R\oplus F_1$ in the
 first case,} the product of spheres in $\R\oplus F_1$ and $\R\oplus F_2$ in the {second} and the sphere in $\R\oplus F$ in the {third}. The product embeds as a hypersurface in the unit ball of $\R\oplus F$, for example via the map 
 $$
 \bigl ( (s_1,f_1), (s_2,f_2)\bigr )\mapsto \tfrac 16\bigl ( (2+s_1)s_2,f_1,(2+s_1)f_2\bigr),
 $$
 and we obtain from this construction a bordism $\widehat W$ between $\widehat M_1 $ and $\widehat M_2$.  The map 
 $$ j\colon t\mapsto  ( \tfrac 12 (1+t),0,0\bigr)
 $$
embeds $M{\times} [0,1]$ into $\widehat W$, transversely to the boundary of
$\widehat W$, and on the boundary components of $M{\times} [0,1]$ the
embedding restricts to the given embeddings of $M$ into $\widehat M_1$ and
$\widehat M_2$.  {The embedding into $\widehat M_1$ is the
  composition of the embedding into $\widehat M_0$ with the embedding of
  $\widehat M_0$ into $\widehat M_1$.} The class $a\in K^0_G(M)$ determines a
class  $a\in K^0_G(M{\times}[0,1])$ by homotopy invariance, and the triple $
(\widehat W, j_{!}(a), f\circ \pi)$ gives a bordism between the cycles
representing  $\beta^{F_2}(\beta^{F_1}(c))$ and $\beta^{F}(c)$, as
required, using functoriality of the Gysin homomorphism to
  describe $\beta^{F_2}(\beta^{F_1}(c))$ in terms of the embedding
  $M\hookrightarrow \widehat M_1$.
\end{proof}

Now fix a universal space $\underline E G$ as in Section~\ref{proper-sec}.   
Let $Z$ be a $G$-finite, $G$-sub\-complex   of $\underline {E} G$.  In order to cope with the contingency that $G$ might be finite we shall modify the notion of $G$-standard bundle as advertised in Remark~\ref{modification-remark}, so that standard $G$-bundles are now taken to be suitable subbundles of $Z\times \C[G_\infty]$.  

Let $E$ be a standard $G$-bundle over $Z$, as considered in Section~\ref{vb-sec}.    Form a partial order on the set of pairs $(Z,E)$ by inclusion:
$$
(Z_1,E_1)\le (Z_2,E_2)\quad \Leftrightarrow  \quad
Z_1\subseteq Z_2 \quad \text{and}\quad  E_1 \subseteq E_2\vert_{Z_1}.
$$
 According to the results of Section~\ref{vb-sec} this is a directed set.

\begin{definition}
For $j\in \Z$ define  groups $k^G_j(X,Y)$ to be the direct limits 
$$k^G_j(X,Y) = \varinjlim_{(Z,E)} \Omega^{(Z,E)}_{j+2\rank(E)}(X,Y)$$
over the directed set of all pairs $(Z,E)$, as above.
\end{definition}

\section{Proof of the Main Theorem}
\label{proof-sec}

We aim to prove Theorem~\ref{main-theorem}, that the geometric equivariant $K$-homology groups of Section~\ref{geometric-sec} are isomorphic to the analytic groups of Section~\ref{kk-review-sec}.  We shall do so by comparing the technical groups of the previous section first to    equivariant $KK$-theory and then to geometric $K$-homology.

The equivariant $KK$-groups determine a homology theory on $G$-finite proper $G$-$CW$ pairs (or indeed on arbitrary second-countable $G$-compact proper $G$-$CW$ pairs) if one defines the relative groups for a pair $(X,Y)$ to be $KK_j^G(C_0(X\setminus Y),\C)$.  The boundary maps are provided by the boundary maps of the six-term exact sequence in $KK$-theory. 

If $(M,a, f)$ is a cycle for $\Omega^{(Z,E)}(X,Y)$, then an element of the Kasparov group $KK_j^G(C_0(X\setminus Y),\C)$ may be defined as follows. Form the Dirac operator $D$ on the interior of $M$ using the $\Spin^c$-structure associated to the given stable  $(Z,E)$-structure on $M$.
It determines a class 
$$
[D] \in KK^G_j (C_0(M\setminus \partial M),\C) .
$$
For example we may equip $M\setminus \partial M)$ with a complete $G$-invariant Riemannian metric and then form a $KK$-class using $F= FD(I+D^2)^{-\frac 12}$ as in Section~\ref{kk-review-sec} (it   does not depend on the choice of metric). 
Compare \cite[Ch.~10]{MR2002c:58036}, where the non-equivariant case is
handled; the $G$-compact proper $G$-manifold situation is the same.  We can
then form the Kasparov product $a\otimes [D]\in
KK_j^G{(C_0(M\setminus \partial M),\C)}$
and hence the class  
$$
f_*(a\otimes [D])\in KK_j^G(C_0(X\setminus Y),\C)  
$$
  more or less exactly as we did in Section~\ref{kk-review-sec}.
  
\begin{theorem} The correspondence that associates to a cycle $(M,a,f)$ the element $f_*(a\otimes [D])$ above is a natural transformation 
$$
{\mu\colon} k_*^G(X,Y) \longrightarrow KK_*^G(C_0(X\setminus Y),\C)
$$
between homology theories. \qed
\end{theorem}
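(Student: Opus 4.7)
The plan is to establish four properties in turn: well-definedness on bordism classes in $\Omega^{(Z,E)}_j(X,Y)$; compatibility with the stabilisation maps $\beta^F$, so that the construction descends to the direct limit $k^G_j(X,Y)$; naturality in pairs $(X,Y)$; and commutation with the boundary maps of the two homology theories. Naturality in $(X,Y)$ is immediate from the functoriality of pushforward in Kasparov theory, so the work is concentrated in the remaining three items.

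For bordism invariance, suppose $(M,a,f) = \partial (W,b,g)$ with $W$ a $G$-compact proper $(Z,E)$-manifold with boundary and $b\vert_{\partial W}=a$. Equip the interior of $W$ with a complete $G$-invariant Riemannian metric of cylindrical type near $\partial W$, and form the Dirac operator twisted by a smooth representative of $b$ (available by Lemma~\ref{bijections-lemma}). This produces a relative class
$$ [D_W, b] \in KK^G_{j+1}(C_0(W\setminus\partial W),\C), $$
and the standard cobordism identity $\partial [D_W,b] = [D_M,a]$ in the six-term sequence for the closed inclusion $\partial W\hookrightarrow W$ shows that $f_*(a\otimes [D_M])$ is the $KK$-boundary of $g_*[D_W,b]$. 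Since $g[\partial W]\subseteq Y$, the latter lies in $KK^G_{j+1}(C_0(X\setminus Y),\C)$ and its boundary in $KK^G_j(C_0(Y),\C)$ vanishes, so $f_*(a\otimes[D_M])$ is zero in the relative group $KK^G_j(C_0(X\setminus Y),\C)$. This is the verbatim equivariant extension of the argument of \cite{MR2330153}. The same computation, now applied to $M$ itself viewed as a $(Z,E)$-bordism of $\partial M$, yields the boundary-map compatibility of the fourth item.

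For compatibility with $\beta^F$, let $\widehat M$ denote the sphere bundle of $\R\oplus h^*F$, equipped with its inherited stable $(Z,E\oplus F)$-structure and projection $\pi\colon\widehat M\to M$. It suffices to prove the identity
$$ \pi_*\bigl(\iota_!(a)\otimes [D_{\widehat M}]\bigr) \;=\; a\otimes [D_M] \quad \text{in}\quad KK^G_j(C_0(M),\C), $$
since applying $(f\circ\pi)_* = f_*\circ\pi_*$ to both sides recovers the desired equality of $\mu$-classes. This identity is the content of the fiberwise Bott/Thom isomorphism: the Dirac class on $\widehat M$ factors as a Kasparov product of the pullback $\pi^*[D_M]$ with the fiberwise sphere-Dirac class; the Gysin map $\iota_!$ is the Thom isomorphism implemented by multiplication by $[F]-[F_0]$ on the relative group $K^0_G(\widehat M, M)$; and the defining property of the Bott generator --- that its equivariant Dirac index on $S^{2k}$ is the trivial representation of $\Spin^c(2k+1)$, while the contribution from $F_0 = \pi^*(F\vert_M)$ cancels under the projection formula --- collapses the fiberwise integration to the identity.

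The main obstacle is this last identity: it is the proper-equivariant $KK$-theoretic incarnation of Bott periodicity, and its proof requires a careful Kasparov product computation on a noncompact proper $G$-space, together with the analytic regularity afforded by smooth representatives of $G$-bundles. The non-equivariant version is executed in \cite{MR2330153}, and the results of Sections~\ref{vb-sec} and~\ref{cstar-sec} --- in particular the uniqueness of smooth structures on $G$-bundles guaranteed by Lemma~\ref{bijections-lemma} --- provide precisely the equivariant analytic inputs needed to transport that argument to the present setting. With the fiberwise identity in hand, the four items above combine to yield the asserted natural transformation of homology theories.
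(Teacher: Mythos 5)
Your overall plan---verify bordism invariance, compatibility with the stabilisation maps $\beta^F$, naturality, and compatibility with connecting maps, then remark that the analytic input is the equivariant analogue of the arguments of \cite{MR2330153}---is exactly the route the paper takes. The paper itself gives no further detail, simply asserting that the proof of \cite[Theorem 6.1]{MR2330153} (combined with Theorem~\ref{mu-well-defined}) goes through verbatim once one has the smooth-structure and equivariant-$K$-theory inputs of Sections~\ref{vb-sec} and~\ref{cstar-sec}; your elaboration of the $\beta^F$-compatibility via the Thom--Gysin identity $\pi_*\bigl(\iota_!(a)\otimes[D_{\widehat M}]\bigr)=a\otimes[D_M]$ and the defining property of the fiberwise Bott generator is exactly what that deferral means.

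The one place where your write-up goes astray is the bordism-invariance step. You take $(M,a,f)=\partial(W,b,g)$ with $M$ closed and then write ``Since $g[\partial W]\subseteq Y$.'' But $\partial W=M$ and $g\vert_M=f$, and the cycle condition for $\Omega^{(Z,E)}_j(X,Y)$ requires only $f[\partial M]\subseteq Y$; as $\partial M=\emptyset$ here, nothing forces $f[M]$ into $Y$, so $g$ is not a map of pairs $(W,\partial W)\to(X,Y)$, $g_*[D_W,b]$ need not live in $KK^G_{j+1}(C_0(X\setminus Y),\C)$, and the displayed chain of memberships does not make sense. The correct and standard repair is to argue inside the pair $(W,\partial W)$ itself: the cobordism identity places $a\otimes[D_M]$ in the image of $\partial\colon KK^G_{j+1}(C_0(W\setminus\partial W),\C)\to KK^G_j(C_0(\partial W),\C)$, hence in the kernel of $i_*\colon KK^G_j(C_0(M),\C)\to KK^G_j(C_0(W),\C)$ by exactness, and since $f=g\circ i$ one concludes $f_*(a\otimes[D_M])=g_*\bigl(i_*(a\otimes[D_M])\bigr)=0$. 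The genuinely relative case, where the cycles themselves carry boundary mapping into $Y$ and bordisms are manifolds with corners, is handled by the bookkeeping of \cite[Definition~5.5]{MR2330153}, with the factorisation argument just given as the engine.
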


 This is a mild elaboration of Theorem~\ref{mu-well-defined} and the equivariant counterpart of  \cite[Theorem 6.1]{MR2330153}.  The equivariant case may be handled exactly as in \cite{MR2330153}.

Our first goal {is} to show that this natural transformation 
is an isomorphism on $G$-finite proper $G$-$CW$-complexes.  To do so it
suffices to show that it is an isomorphism on the {building
  blocks} $X=G/H$ corresponding to finite subgroups  $H$  of $G$.   The
following observation clarifies what needs to be done in this case.
 
 \begin{lemma}
 Let $H$ be a finite subgroup of $G$.
There is a commutative diagram 
$$
\xymatrix{
k_*^G (G/H)\ar[r]^-{\mu} &  KK_*^G(C_0(G/H),\C) \\
k_*^H (\Pt)\ar[r]_{\mu}\ar[u]^{\Ind} &  KK_*^H(\C,\C)\ar[u]_{\Ind}
}
$$
in which  the vertical maps are isomorphisms.  
\end{lemma}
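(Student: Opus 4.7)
The plan is to define the two vertical induction maps at the cycle level, verify that both of them are isomorphisms, and then check commutativity of the square with the horizontal $\mu$'s.

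The right vertical induction in Kasparov theory is the standard induction isomorphism of \cite{MR918241}. Since $G$ is discrete and $H\subseteq G$ is finite, the induced algebra $\Ind_H^G \C$ is $C_0(G/H)$, and Kasparov's general induction functor specializes to an isomorphism $\Ind\colon KK^H_*(\C,\C)\to KK^G_*(C_0(G/H),\C)$. No further work is needed on this side.

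On the geometric side I define $\Ind$ at the cycle level by
$$
(M,a,\ast)\longmapsto \bigl(G\Times_H M,\; a',\; \pi\bigr),
$$
where $\pi\colon G\Times_H M\to G/H$ is the natural projection and $a'\in K^0_G(G\Times_H M)$ corresponds to $a\in K^0_H(M)$ under the natural identification of $G$-bundles on $G\Times_H M$ with $H$-bundles on $M$. Because $G$ is discrete and $H$ is finite, $G\Times_H M$ is simply a disjoint union $\bigsqcup_{gH\in G/H}gM$, with $G$ permuting the copies and $H$ acting on each copy as on $M$; in particular $T(G\Times_H M)=G\Times_H TM$, so a stable $(\Pt,V)$-structure on $M$ transports to a stable $(G/H,G\Times_H V)$-structure on $G\Times_H M$. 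Compatibility with the modifications $\beta^F$ used to build $k^G_*$ is immediate, since $G\Times_H$ commutes with forming sphere bundles and with Gysin homomorphisms.

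I construct an inverse by restriction. Given a cycle $(N,b,f)$ for $k^G_*(G/H)$, set $M=f^{-1}(eH)$: since $G/H$ is discrete and $f$ is equivariant, $M$ is an open-and-closed $H$-invariant submanifold of $N$, the stabilizer of $eH$ in $G$ is exactly $H$, and the decomposition $N=G\cdot M\cong G\Times_H M$ is canonical, as is the restriction of $b$ and of the stable $(Z,E)$-structure to an $H$-cycle over $\Pt$. These two constructions are mutually inverse at the cycle level, respect the bordism relation (a bordism decomposes in the same way), and are compatible with the $\beta^F$'s, so they pass to the direct limits defining $k^H_*(\Pt)$ and $k^G_*(G/H)$, making the left vertical map an isomorphism.

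Commutativity of the diagram reduces to the compatibility of Dirac operators with induction: the Dirac operator on $G\Times_H M$ associated to the induced $\Spin^c$-structure is the induction of the Dirac operator on $M$, and Kasparov's induction map carries the KK-class of the latter to the class of the former. Given the explicit product structure $G\Times_H M=\bigsqcup_{gH}gM$, this boils down to the statement that on each summand $gM$ the operator is a translate of $D_M$, which is immediate from the construction. The only real bookkeeping task, and hence the point of the argument that requires the most care, is to verify that the assignment $(\Pt,V)\mapsto (G/H,G\Times_H V)$ is cofinal in the directed system of pairs $(Z,E)$ over $G/H$ used to form $k^G_*(G/H)$, so that the direct limits on the two sides match; this follows from the discussion of standard bundles in Section~\ref{vb-sec} applied to the $G$-finite subcomplex $G/H\subseteq \underline E G$.
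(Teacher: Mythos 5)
Your overall strategy is the same as the paper's: define the geometric induction at the cycle level by $M\mapsto G\Times_H M$, use the fact that for finite $H$ every $G$-manifold mapping to $G/H$ canonically decomposes as $G\Times_H f^{-1}(eH)$ to produce the inverse, and observe that Dirac operators induce compatibly. The one place where your account diverges, and where it is not quite right as stated, is the matching of the two direct limits. You assert that $(\Pt,V)\mapsto(G/H,G\Times_H V)$ is ``cofinal in the directed system of pairs $(Z,E)$ used to form $k^G_*(G/H)$,'' but that directed system runs over all pairs $(Z,E)$ with $Z$ a $G$-finite subcomplex of $\underline E G$, not subcomplexes of $G/H$, and nothing of the form $(G/H,W)$ dominates a general such $(Z,E)$; so cofinality in the ordinary sense simply fails, and the appeal to Section~\ref{vb-sec} does not supply it.

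What does the work in the paper's proof is not cofinality but the universal property of $\underline E G$: since a stable $(Z,E)$-structure on a $G$-manifold $N$ equipped with an equivariant map $N\to G/H$ includes an $h$-map $N\to Z\subseteq\underline E G$, and since $N\cong G\Times_H M$ with $M=f^{-1}(eH)$, the restriction $h\vert_M$ is an $H$-map into a proper $H$-space and hence $H$-homotopic to the constant map at the $H$-fixed $0$-cell; inducing back up, $h$ is $G$-homotopic to a map factoring through $G/H\subseteq\underline E G$. This is what lets you trade the $(Z,E)$-structure for a $(G/H,E\vert_{G/H})$-structure and hence for a $(\Pt,V)$-structure on $M$, making the restriction map land in $k^H_*(\Pt)$. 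With that substitution your argument is complete and matches the paper's.
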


\begin{proof}
The right-hand induction map  is defined as follows.   If $\mathcal H$ is a Hilbert space, or Hilbert module, with unitary $H$-action, define $\Ind \mathcal H$ to be the space of square-integrable sections of  $G\times_H\mathcal H$.  It carries a natural representation of $C_0(G/H)$, and if 
$F$ is an $H$-equivariant Fredholm operator on $\mathcal H$, then operator $\Ind F$ on $\Ind \mathcal H$ given by the pointwise action of $F$ determines a cycle for $KK_j^G (C_0(G/H),\C)$.  

The inverse to the induction map defined in this way is given by compression to the range of  the projection operator determined by the indicator function of the identity coset in $G/H$ (this function being viewed as an element of $C_0(G/H)$).

The left-hand induction map is defined in a similar fashion.  We choose our model for $\underline E H$ to be a point, which we can include into $\underline E G$ as an $H$-fixed $0$-cell, and we use  the induced manifolds  $\Ind M = G\times_H M$, which map canonically to $G/H\subseteq \underline E G$.  Note that any $G$-manifold that maps to $G/H$ has this form. The construction of an inverse to induction  and the proof that induction is an isomorphism  are immediate upon noting that any $G$-map $f\colon \Ind M\to  \underline  E G$ is $G$-equivariantly homotopic to one that factors through $G/H\subseteq \underline E G$. \end{proof}

It suffices, therefore, to prove that the   map 
$$
\mu \colon k^H_j(\Pt)\longrightarrow KK^H_j (\C,\C)
$$
is an isomorphism.  The right hand group is isomorphic to the representation ring $R(H)$ when $j$ is even and is zero when $j$ is odd.

\begin{lemma}
\label{fried-egg-lemma}
Let $(M,a,f)$ be a cycle for $k_j^{G} (X)$.  If $a=a_1+a_2$ in $K^0_G(M)$, then 
$$
[(M,a,f)] = [(M,a_1,f)] + [(M,a_2,f)]
$$
in $k_j^{G} (X)$.
\end{lemma}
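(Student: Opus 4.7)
The plan is to establish the relation by producing an explicit cobordism showing that the cycle $(M,a_1{+}a_2,f) \sqcup (-M,a_1,f) \sqcup (-M,a_2,f)$ is a boundary in the technical bordism group defining $k^G_j(X)$, possibly after passing to a larger $(Z,E)$ in the directed system. Concretely, one wants a stable $(Z,E)$-manifold $W$ of dimension one greater than $M$ with $\partial W = M \sqcup (-M) \sqcup (-M)$ (three components, each diffeomorphic to $M$ with compatible stable structures), together with a class $b \in K^0_G(W)$ restricting to $a_1{+}a_2$, $a_1$, $a_2$ on the three components, and a map $g\colon W\to X$ extending $f$ on each piece.

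As a preliminary, I would observe that $(M,0,f)$ represents zero in $k^G_j(X)$: applying $\beta^F$ for a suitable $G$-bundle $F$ on $Z$ gives $(\widehat M, 0, f\circ\pi)$, where $\widehat M = S(\R\oplus h^*F)$ is the boundary of the disk bundle $B(\R\oplus h^*F)$.  This disk bundle inherits a natural stable $(Z, E\oplus F)$-structure extending that of $\widehat M$ (since $TB \cong \pi^*(TM \oplus \R \oplus h^*F)$), and the zero $K$-theory class extends trivially; so $(\widehat M,0,f\circ\pi)$ is a boundary. Hence cycles of the form $(M,0,f)$ may be freely added or removed, which is useful for rewriting the disjoint unions into a form amenable to an explicit bordism construction.

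For $W$ itself, I would start from two disjoint copies of $M\times[0,1]$ and attach a $1$-handle between small disks in the bottom boundaries, producing a manifold whose boundary has two components equal to $M$ and one component equal to the connected sum $M\# M$. Further surgery --- or, more cleanly, passage to a fiberwise pair-of-pants cobordism inside the total space of a sphere bundle obtained by stabilizing via $\beta^F$ for $F$ of sufficiently high rank --- converts the connected-sum component back into a single copy of $M$. The class $b$ on $W$ is then assembled using the equivariant Mayer-Vietoris sequence guaranteed by the L\"uck-Oliver results reviewed in Sections~\ref{vb-sec} and~\ref{cstar-sec}: since $W$ is built from two cylinders joined by a handle, its $K$-theory restricted to the three boundary components admits the required split, and the map $g$ is produced from $f$ by composition with the retraction of $W$ onto $M$.

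The hardest part will be Step~3: simultaneously arranging the correct topology of $W$ and the existence of the class $b$ with precisely the prescribed restrictions $a_1{+}a_2$, $a_1$, $a_2$. The direct limit structure of $k^G_j$ is essential here --- by stabilizing sufficiently via $\beta^F$, the manifold $M$ is replaced by a high-dimensional sphere bundle $\widehat M$ in which one gains enough geometric room to realize $W$ as a fiberwise pair-of-pants (modelled on $D^{2k+1}$ with two open balls removed), and in this setting both the cobordism and the requisite $K$-theory class are produced directly using the Bott generator and the Gysin map already introduced in Section~\ref{cstar-sec}.
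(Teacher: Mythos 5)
Your proposal correctly identifies the overall geometric shape of the argument: one stabilizes by a $\beta^{F}$-step so that the cycle becomes a sphere bundle over $M$, and then one uses a fiberwise ``pair-of-pants'' cobordism in the resulting ball bundle. That is indeed the picture the paper uses. However, there are two genuine gaps.

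First, your primary route through $1$-handles and connected sums does not work as stated: attaching a $1$-handle between the bottom ends of two copies of $M{\times}[0,1]$ produces a bordism between $M\sqcup M$ and $M\# M$, and there is no surgery that turns the $M\# M$ boundary component back into a single $M$ in general ($M\#M$ is bordant to $M\sqcup M$, not to $M$). The ``more cleanly'' alternative you offer in passing (fiberwise pair-of-pants after stabilizing) is the one that actually succeeds, and the paper works there from the start: it takes the explicit $3$-manifold $W$ bounded by one large $2$-sphere and two disjoint small $2$-spheres inside it, so that $M{\times}W$ is a bordism between $\widehat M = M{\times}S^2$ and two copies of $\widehat M$, all arising from $\beta^{\C}$.

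Second, and more seriously, your proposal does not identify the mechanism that produces the required $K$-theory class on the cobordism, and the Mayer-Vietoris suggestion you give would not supply it. If you try to glue $a_1$ on one cylinder to $a_2$ on the other along a handle, what you get on the bottom boundary is a class that looks like $a_1$ on one half and $a_2$ on the other, not $a_1{+}a_2$. The paper's device is different and is the crux of the whole lemma: it chooses two \emph{disjoint} arcs $I_1, I_2$ in $W$ with trivial normal bundle, joining the north and south poles of the large sphere to poles of the two small spheres, pulls $a_1$ back to $M{\times}I_1$ and $a_2$ to $M{\times}I_2$, and Gysin-pushes the union forward to $M{\times}W$. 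On the two small boundary spheres the restriction is $\iota_!(a_1)$ and $\iota_!(a_2)$. On the large boundary sphere, compatibility of Gysin with restriction gives $i^N_!(a_1) + i^S_!(a_2)$ where $i_N,i_S$ are the north- and south-pole sections; since $i_N$ and $i_S$ are $G$-homotopic and the Gysin map is homotopy-invariant, this equals $i^N_!(a_1{+}a_2) = \iota_!(a)$, which is exactly $\beta^{\C}(M,a,f)$'s $K$-theory class. That homotopy of poles inside the sphere fiber is what converts the disjointness into the sum $a_1{+}a_2$; it has no counterpart in your Mayer-Vietoris sketch. Your closing appeal to ``the Bott generator and the Gysin map'' gestures at the right tools but does not articulate this step, which is where the proof actually lives.
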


\begin{proof}
Suppose that $(M,a,f)\in  \Omega_{j+2k}^{(Z,E)}(X)$.  Fix a bordism $W$ between $S^2$ and $S^2\sqcup S^2$ by situating two copies of the $2$-sphere of radius $\frac 14 $  inside the unit sphere. There are smooth paths $I_1$ and $I_2$ embedded into the bounding manifold $W$ that connect the north and south poles of the large sphere to the south and north poles of the small spheres, that meet the spheres transversally, and that have trivial normal bundles in $W$.  
\begin{center} 
 \includegraphics[height=3cm]{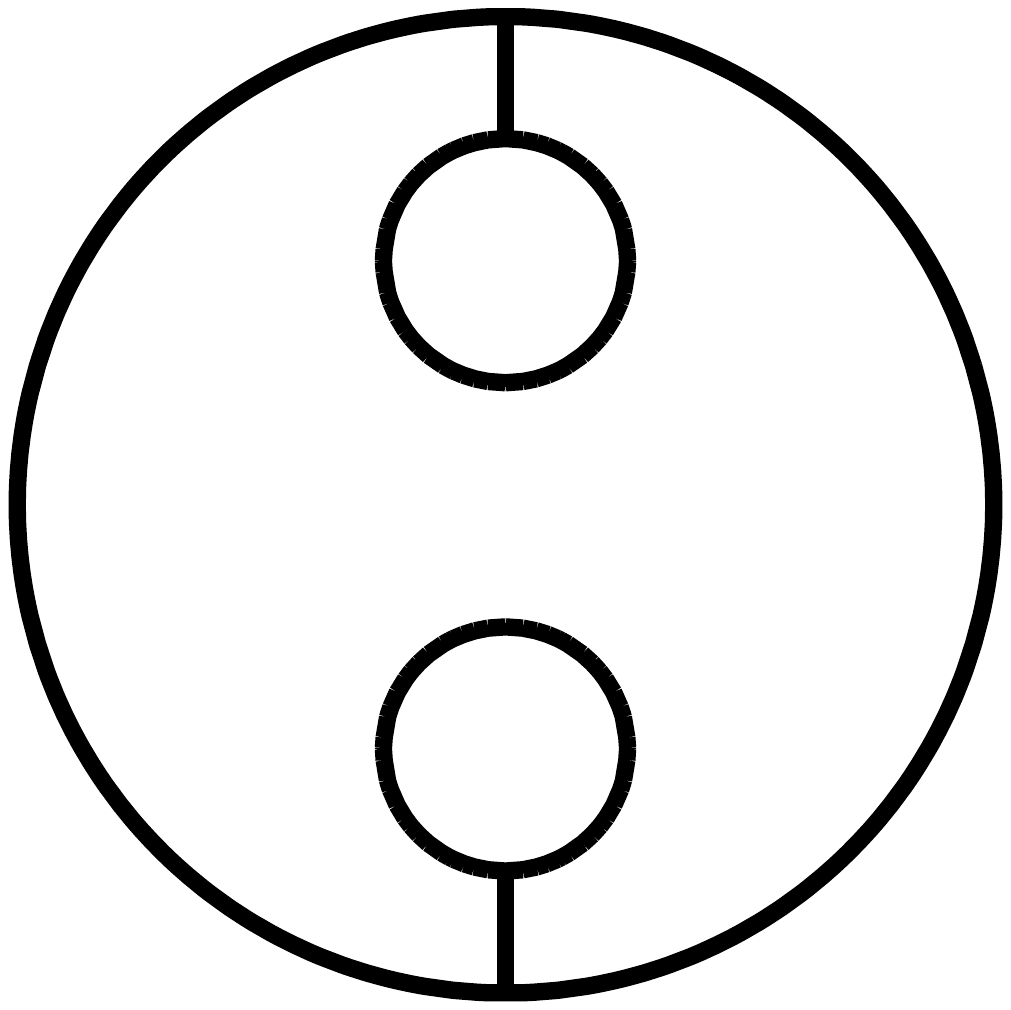}  
 \end{center}

\noindent
The class $a_1\in K_G ^0(M)$ pulls back to a class $\widetilde a_1 \in K_G ^0(M\times I_1)$.  Similarly, the class $a_2$ pulls back to a class $\widetilde a_2 \in K_G^0(M\times I_2)$.  We obtain 
$$\widetilde a := \widetilde a_1 \sqcup \widetilde a_2 \in K_G ^0(M\times I)=
K_G^0(M\times I_1)\oplus K_G^0(M\times I_2),
$$
where $I=I_1\sqcup I_2$.
Now  form the class 
$$j_!  ( \widetilde a ) \in K_G^0(M\!\times \!W),$$
 where $j$ is the inclusion of $M\times I$ into $M\!\times\! W$.
If $\tilde f\colon M\times W \to X$ is the projection from $M\times W$ to $M$,
followed by $f$, then $(M{\times} W, j_!(\tilde a), \tilde f )$ is a   bordism
between the images of $(M,a,f)$ and $(M,a_1,f) \sqcup  (M,a_2,f)$ under the
map $$\beta\colon \Omega_j(X)^{(Z,E)}\to \Omega_{j+2(k+1)}^{(Z,E\oplus
  \C)}(X).$$ 
 For this, observe that 
$$\beta(M,a,f)=(M\times
  S^2,i^N_!(a_1+a_2),f)= (M\times S^2,i^N_!(a_1)+i^S_!(a_2),f)$$
where $i_N,i_S\colon M\to M\times S^2$ are the north pole inclusion and the
south pole inclusion, which are $G$-homotopic, and the Gysin map is homotopy
invariant. Finally, $i_!^N(a_1)=j_!(\tilde{a}_1)|_{M\times S^2}$ by the
compatibility of the Gysin map with restriction, so that
$i^N_!(a_1)+i^S_!(a_2)=j_!(\tilde a)|_{M\times S^2}$. 

In view of the definition of $k^G_j(X)$ the lemma is proved.
\end{proof}

\begin{remark} The lemma shows that $-[(M,a,f)] = [(M,-a,f)]$ in $k^G_j(X)$.
\end{remark}

\begin{proposition} 
Let $H$ be a finite group.
The homomorphism   
$$\mu \colon k^H_*(\Pt) \longrightarrow KK^H_*(\C,\C)$$
 is an isomorphism.  
\end{proposition}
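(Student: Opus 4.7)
The target $KK^H_*(\C,\C)$ equals the representation ring $R(H)$ in degree zero and vanishes in degree one, with $\mu$ given by the equivariant index of the Dirac operator on $M$ twisted by $a$. Surjectivity in degree zero is then immediate: for any $V\in R(H)$, the $0$-dimensional cycle $(\Pt,[V],*)\in\Omega^{(\Pt,0)}_0(\Pt)$, with trivial stable framing and $\Pt$ an $H$-fixed point, has $\mu$-image $V$.

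For injectivity in degree zero and the vanishing in degree one I would use an equivariant Pontryagin--Thom reduction that shows every cycle is equivalent in $k^H_*(\Pt)$ to a $0$-dimensional one. Given a cycle $(M,a,*)$ in $\Omega^{(\Pt,E)}_n(\Pt)$, since $H$ is finite and $M$ is compact the Mostow--Palais theorem provides an $H$-equivariant smooth embedding $\iota\colon M\hookrightarrow W$ into some finite-dimensional complex $H$-representation $W$. The equivariant normal bundle $\nu$ then satisfies $TM\oplus\nu\cong W$ as real $H$-bundles, so, enlarging the indexing pair within the direct system to $(\Pt,E\oplus W)$, one aligns the stable framing of $M$ with this identity. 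The $\beta^W$-modification then replaces the cycle by $(\widehat M,\iota^N_!(a),*)$ with $\widehat M=M\Times S(\R\oplus W)$ and $\iota^N$ the north-pole inclusion. Identifying $W$ with the complement of the south pole in $S(\R\oplus W)$, the tubular neighborhood of $\iota(M)$ inside $W$ provides an $H$-equivariant bordism, honestly produced in the $(\Pt,E\oplus W)$-framed sense, from $\widehat M$ to a cycle supported at a single $H$-fixed point of $S(\R\oplus W)$; this bordism transports the $K$-theory class to the Gysin push $\iota_!(a)\in K^0_H(\Pt)=R(H)$.

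By the equivariant index theorem the push-forward along the composite $M\hookrightarrow W\to\Pt$ is precisely $\mu(M,a,*)$, so every class in $k^H_*(\Pt)$ is represented by $(\Pt,\mu(M,a,*),*)$. This forces $\mu$ to be injective in degree zero; and because the representatives produced are always $0$-dimensional, $k^H_1(\Pt)=0$. The main obstacle will be carrying out the Pontryagin--Thom collapse rigorously at the level of stable $(Z,E)$-framings---producing an actual bordism in $\Omega^{(\Pt,E\oplus W)}(\Pt)$ rather than merely a homotopy-theoretic collapse---and matching the Gysin push with the Dirac index. Both are equivariant refinements of arguments carried out non-equivariantly in \cite{MR2330153}, with Lemma~\ref{fried-egg-lemma} and the compatibility of $\beta$-modifications with Gysin maps, established above, supplying the key manipulations.
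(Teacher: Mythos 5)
Your overall strategy matches the paper's: show surjectivity of $R(H)\to k^H_0(\Pt)$ by direct evaluation at a point cycle, and prove injectivity together with $k^H_1(\Pt)=0$ by embedding $M$ equivariantly into a representation, bordising it away, and using Bott periodicity. However, the ``Pontryagin--Thom'' reduction at the core of your argument, as written, does not work, and the gaps are more than bookkeeping.

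First, the modification $\beta^W$ produces $\widehat M = M\times S(\R\oplus W)$, of dimension $\dim M + \dim_\R W$, whereas the boundary of a tubular neighborhood of $\iota(M)$ inside $W$ is the sphere bundle $S(\R\oplus\nu)$ of the normal bundle, of dimension $\dim_\R W - 1$. These manifolds are not the same, and in fact no $\beta^F$-modification of $M$ equals a tubular boundary unless the normal bundle is \emph{trivial} with fiber $F$. That forces you to do what the paper does: use the stable $(\Pt,E)$-framing (which makes $TM$ stably trivial), pass from $W$ to $E\oplus W$ (or add a multiple of the regular representation) so that the normal bundle becomes an honest trivial bundle $M\times V$, and then apply $\beta^V$. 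Only then does $\widehat M$ appear as a codimension-one hypersurface in $E\oplus V\oplus\R$.

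Second, even after this correction, the ``honest bordism'' does not terminate at a point cycle. The annular region between the tubular boundary $\widehat M$ and a large enclosing sphere is a bordism from $\widehat M$ to the big sphere $S\subset E\oplus V\oplus\R$ --- a high-dimensional manifold, not a point. Dimensions alone forbid your claim that ``the representatives produced are always $0$-dimensional,'' and in particular your one-line deduction of $k^H_1(\Pt)=0$ from this claim fails. The odd case requires the separate observation that, by equivariant Bott periodicity, every class in $K^0_H$ of an odd-dimensional sphere extends over the ball, so the resulting sphere cycle bounds.

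Third, transporting the $K$-theory class is not automatic: the Gysin class $b=\iota_!(a)$ on $\widehat M$ does \emph{not} extend over the disk bundle $X$ it bounds, so $\widehat M$ cannot simply be capped off. This is precisely where the paper applies Mayer--Vietoris to the union of $X$ and the annulus $Y$ to write $b=b_X+b_Y$, then invokes Lemma~\ref{fried-egg-lemma} to split $[(\widehat M,b)]=[(\widehat M,b_X)]+[(\widehat M,b_Y)]$; the first term is a boundary and only the second survives as a sphere cycle $(S,c)$. Finally Bott periodicity on $S$ decomposes $c$ into a trivial part (killed by the ball) and a part in the image of the point-inclusion Gysin map, producing the desired $0$-dimensional representative. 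You name Lemma~\ref{fried-egg-lemma} and Bott periodicity as ``key manipulations,'' but the proposal does not actually say where or how they enter, and your description of the bordism as ending at a point is incompatible with the structure of the argument they make possible.
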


\begin{proof}
We shall prove that the homomorphism   
\begin{gather*}
R(H)\longrightarrow k^H_0(\Pt)\\
a\mapsto (\Pt, a, \Id)
\end{gather*}
 is surjective, and that in  addition, the group $k^H_1(\Pt)$ is zero.  This will suffice since $KK_0^H(\C,\C)\cong R(H)$, with the isomorphism being given by the above correspondence and the map $\mu$, while $KK_1^H(\C,\C)=0$.

Fix an element of    $k^H_0(\Pt)$ and represent it by a  cycle $(M,a)$ for
$\Omega_{2n} ^{(\Pt,E)}(\Pt)$ (we drop the map $f\colon M\to \Pt$ from our
notation here and below).  Thus $M$ is a $2n$-dimensional $\Spin^c$-manifold
with a given stable isomorphism from its tangent bundle to a trivial bundle
$M{\times} E$, where $E$ is a complex   representation of $H$.  The manifold
$M$ may be equivariantly embedded in  a finite-dimensional complex
representation $V$  of $H$. By composing with the subspace embedding $V\to
E{\oplus} V$  and by adding to $V$ a multiple of the {regular}
representation, if necessary, we arrive at an embedding into $E{\oplus }V$
with trivial normal bundle $F:= M{\times}V$.

Using the map
$$
\beta^F\colon \Omega^{(\Pt,E)}_{2n}(\Pt) \longrightarrow \Omega^{(\Pt,E\oplus V)}_{2n+2k}(\Pt)
$$
we find that the given  element of  $k^H_0(\Pt)$ is represented by the cycle
$(N,b) =(\widehat M, \iota_*(a))$.  Here   $N=\widehat M$ is a codimension one submanifold of  $E{\oplus}V{\oplus} \R$   and is the boundary of  some compact $X$ (namely the ball bundle associated to $\widehat M$).    By enclosing $X$ in a large ball we construct a bordism $Y$ between $\widehat M$ and a sphere $S\subseteq V{\oplus} F{\oplus} \R$.  The union of $ X $ and $  Y$ is the ball bounded by the sphere $S$,  and by applying the Mayer-Vietoris sequence in $H$-equivariant $K$-theory to the decomposition $ X \cup   Y$ of the ball, we find that the class $b\in K^0_H(N)$ may be written as a sum $b_X+b_Y$, where $b_X$ is a the restriction of a class in $K^0_H(X)$ and $b_Y$ is a the restriction of a class in $K^0_H(Y)$.  By the previous lemma, 
$$
[(N,b)] = [(N,b_X)] + [(N,b_Y)].
$$
The first class is zero in $ \Omega^{(\Pt,E\oplus F)}_{2n+2k}(\Pt)$, while the second is equal to some class $[(S,c)]$ thanks to the bordism $Y$.  We have therefore shown that every class in $k_0^H(\Pt)$ represented by a sphere in some $W{\oplus} \R$, where $W$ is a complex representation of $H$.

To complete the proof we invoke Bott periodicity.  The class $c$ is a sum $c_0 + c_1$ where $c_0$ is represented by a trivial bundle (one that extends over the ball) and $c_1$ is in the image of the Gysin map associated to the inclusion 
$$\Pt\mapsto (0,1)\in   S\subseteq W\times \R.$$
This completes the computation of $k^H_0(\Pt)$.  

The proof that $k_1^H(\Pt) = 0$ is essentially the same.  Every cycle is equivalent to one of the form $[(S,c)]$, where $S$ is the sphere in a complex representaion of $H$.  But the sphere is odd-dimensional and by Bott periodicity every class in $K_H^0(S)$ extends over the ball.  So $(S,c)$ is a boundary.
\end{proof}

\begin{corollary}
If $X$ is a $G$-finite proper $G$-$CW$-complex, then the map 
$$
\mu\colon k_*^G(X)\longrightarrow KK_*^G(C_0(X),\C)
$$
is an isomorphism.
\qed
\end{corollary}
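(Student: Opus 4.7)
The plan is to deduce the corollary from the proposition just established (the case of $G/H$) by a standard Mayer-Vietoris comparison argument between two homology theories on $G$-finite proper $G$-$CW$-complexes. Both $k^G_*(X,Y)$ and $KK^G_*(C_0(X\setminus Y),\C)$ have already been given the structure of $\Z/2\Z$-graded homology theories on such pairs, fitting into long exact sequences and satisfying excision, and $\mu$ has been constructed as a natural transformation between them. So it suffices to verify that $\mu$ is an isomorphism on each equivariant cell $G/H$ and then invoke a standard comparison theorem.

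First I would reduce to the zero-skeleton, that is, to the spaces $X = G/H$ for $H$ a finite subgroup of $G$. For this, one proceeds by induction on the number of equivariant cells. If $X$ is obtained from $X'$ by attaching a cell of the form $D^n \times G/H$ along $S^{n-1} \times G/H$, then excision and the long exact sequences of the pairs $(X, X')$ on both theories, together with the natural transformation $\mu$ and the five-lemma, reduce the isomorphism statement for $X$ to the isomorphism statements for $X'$, for $S^{n-1}\times G/H$, and for $D^n \times G/H$. The latter two in turn reduce via homotopy invariance to $G/H$ (possibly after stabilization by suspension, which interacts well with the periodicity built into $k^G_*$).

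Next, the preceding lemma provides a commutative square
$$
\xymatrix{
k_*^H(\mathrm{pt})\ar[r]^-{\mu}\ar[d]_{\Ind} &  KK_*^H(\C,\C)\ar[d]^{\Ind}\\
k_*^G(G/H)\ar[r]^-{\mu} &  KK_*^G(C_0(G/H),\C)
}
$$
in which the vertical induction maps are isomorphisms. Hence the lower horizontal map is an isomorphism if and only if the upper one is. But the upper map is precisely the map shown to be an isomorphism in the immediately preceding proposition (surjective onto $R(H)$ in even degree, with vanishing source and target in odd degree). Combining these two reductions finishes the proof.

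The only subtle point I foresee is the inductive step on cells: one must be careful that the attaching construction is compatible with the relative groups $k^G_*(X,X')$ and that excision really identifies $k^G_*(X,X')$ with the corresponding group for $(D^n\times G/H, S^{n-1}\times G/H)$. This compatibility has been set up in the previous section (the homotopy argument using a $G$-invariant collar, which shows that a map of pairs that is a homeomorphism on complements induces an isomorphism on relative bordism groups), and the analogous excision for $KK^G_*$ is standard. Once these ingredients are in place, the five-lemma delivers the result without further difficulty.
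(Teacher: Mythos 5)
Your proposal is correct and follows the paper's approach exactly: both theories are $\Z/2\Z$-graded homology theories with long exact sequences and excision, $\mu$ is a natural transformation between them, and by a standard comparison argument (induction on equivariant cells plus the five-lemma) the isomorphism reduces to the building blocks $G/H$, which are handled by the induction lemma and the proposition for finite $H$. The paper leaves the corollary unproved precisely because it states in advance that ``it suffices to show that it is an isomorphism on the building blocks $X = G/H$''; you have simply spelled out that reduction.
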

  Next we need to relate the technical groups $k_j^G(X)$ to the geometric groups $K_j^G(X)$.

\begin{lemma}
The class in $K^G_j(X)$ of an equivariant $K$-cycle $(M,E,f)$ depends only on $M$, $f$ and the class of the $G$-bundle $E$ in the Grothendieck group $K^0_G(X)$.
\end{lemma}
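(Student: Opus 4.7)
The plan is to unpack the equality of Grothendieck classes as an honest bundle isomorphism after stabilization, and then apply the direct sum relation (a) of Definition~\ref{k-geom-def} together with cancellation in the abelian group $K^G_j(X)$. (I read this statement as referring to the class of $E$ in $K^0_G(M)$, which is the natural home for isomorphism classes of $G$-bundles over the domain of the $K$-cycle.)

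Concretely, suppose $E_1$ and $E_2$ are smooth Hermitian $G$-bundles on $M$ representing the same class in $K^0_G(M)$. By the very construction of the Grothendieck group there exists a $G$-bundle $H$ on $M$ such that $E_1 \oplus H \cong E_2 \oplus H$. By Lemma~\ref{bijections-lemma}, every $G$-bundle on the $G$-compact proper $G$-manifold $M$ admits an essentially unique smooth Hermitian structure; consequently I may take $H$ to be a smooth Hermitian $G$-bundle and the above isomorphism to be one of smooth Hermitian $G$-bundles. Thus $(M,H,f)$ and $(M,E_i\oplus H,f)$ are bona fide equivariant $K$-cycles for $X$.

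Applying relation (a) of Definition~\ref{k-geom-def} twice, for $i=1,2$ I obtain
$$[(M,E_i,f)] + [(M,H,f)] \;=\; [(M\sqcup M, E_i\sqcup H, f\sqcup f)] \;=\; [(M, E_i\oplus H, f)]$$
in $K^G_j(X)$. The cycles $(M, E_1\oplus H, f)$ and $(M, E_2\oplus H, f)$ are manifestly isomorphic (via the identity on $M$ and the $G$-bundle isomorphism $E_1\oplus H \cong E_2\oplus H$), and so represent the same class. Cancelling $[(M,H,f)]$ on both sides, using that $K^G_j(X)$ is an abelian group, yields $[(M, E_1, f)] = [(M, E_2, f)]$, as required.

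No genuine obstacle arises here: the statement is essentially a formal consequence of the disjoint-union/direct-sum axiom combined with cancellation in an abelian group. The substantive input — that $K^0_G(M)$ is a well-behaved group, with every class representable by a genuine $G$-bundle, and that topological and smooth Hermitian structures agree — has already been supplied by Section~\ref{vb-sec} and Lemma~\ref{bijections-lemma}.
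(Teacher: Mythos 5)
Your proof is correct and is essentially the paper's argument made explicit. The paper's own proof is a single sentence: fixing $M$ and $f$, the map $E\mapsto [(M,E,f)]$ is additive by relation (a) of Definition~\ref{k-geom-def}, hence extends (by the universal property of the Grothendieck group as a group completion) to a homomorphism $K^0_G(M)\to K^G_j(X)$; you have simply unpacked what that universal property says, namely that $[E_1]=[E_2]$ in $K^0_G(M)$ iff $E_1\oplus H\cong E_2\oplus H$ for some $H$, and then used relation (a) plus cancellation in the abelian group $K^G_j(X)$. You are also right that the Grothendieck group in the statement should be $K^0_G(M)$, not $K^0_G(X)$ (an evident typo, as $E$ is a bundle on $M$), and your attention to the smooth/Hermitian structure via Lemma~\ref{bijections-lemma} is a sound, if minor, point that the paper leaves implicit.
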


\begin{proof}
Fixing $M$ and $f$, the map that associates to a $G$-bundle $E$ on $M$ the class of $(M,E,f)$ in $K^G_j(X)$ is additive, and so extends to a map from the Grothendieck group $K^0_G(X)$ into $K^G_j(X)$.
\end{proof}

Thanks to the lemma, we can attach a meaning to the class in the geometric group $K_j^G(X)$ of any triple $(M,a,f)$, whenever   $M$ is a $G$-compact proper $G$-$\Spin^c$-manifold, $f$ is an equivariant map from $M$ to $X$, and $a\in K_G(M)$.  In particular, we can do so when $M$ is a stable $(Z,E)$-manifold.  We obtain in this way a natural transformation 
$$
\Omega^{(Z,E)}_j(X)\longrightarrow K^G_j(X)
$$
\begin{lemma}
If $F$ is any $G$-bundle over $Z$ of rank $k$, then the diagram 
$$
\xymatrix@C=50pt{
 \Omega^{(Z,E)}_j(X)\ar[r] \ar[d]_{\beta^F} &   K^{G}_j (X)\ar@{=}[d]\\
\Omega^{(Z,E\oplus F)}_{j+2k}(X) \ar[r] &K^G_{j+2k}(X)}
$$
is commutative.
\end{lemma}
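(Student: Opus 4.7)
The plan is to unwind both compositions applied to a single cycle $(M,a,f) \in \Omega^{(Z,E)}_j(X)$ and show they yield the same element of $K^G_*(X)$. Write $a = [E_1] - [E_2]$ in $K^0_G(M)$. Going right-then-down produces the class $[(M,E_1,f)] - [(M,E_2,f)] \in K^G_j(X)$, since the right vertical arrow is the identity (both $K^G_j$ and $K^G_{j+2k}$ refer to $K^G_{j \bmod 2}(X)$). Going down-then-right produces $[(\widehat M, \iota_!(a), f\circ\pi)]$, where $\widehat M$ is the sphere bundle of $\R \oplus h^*F$ and $\iota\colon M \hookrightarrow \widehat M$ is the section $m \mapsto (1,0)$.

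The argument rests on three ingredients. First, the vector bundle modification relation of Definition~\ref{k-geom-def}(c), applied with $V = h^*F$ (a complex $G$-bundle of real rank $2k$, with its canonical $\Spin^c$-structure), gives
$$[(M,E,f)] = [(\widehat M,\, F_{\text{Bott}} \otimes \pi^* E,\, f\circ\pi)] \quad \text{in } K^G_*(X)$$
for every $G$-bundle $E$ on $M$, where $F_{\text{Bott}}$ denotes the $G$-bundle written $F$ in Section~\ref{geometric-sec}. Second, the description of the Gysin map from the end of Section~\ref{cstar-sec} yields
$$\iota_!(a) \;=\; ([F_{\text{Bott}}] - [F_0]) \cdot \pi^* a \quad \text{in } K^0_G(\widehat M),$$
with $F_0 = \pi^*(F_{\text{Bott}}|_M)$. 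Third, a boundary argument: for any $G$-bundle $G$ on $M$, the cycle $(\widehat M,\, \pi^*G,\, f\circ\pi)$ is the boundary of $(B,\, p^*G,\, f\circ p)$, where $B$ is the ball bundle of $\R \oplus h^*F$ and $p\colon B \to M$ is the projection.

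Combining these and using the additivity of cycles in the bundle variable that comes directly from Definition~\ref{k-geom-def}(a), we split
$$[(\widehat M, \iota_!(a), f\circ\pi)] \;=\; [(\widehat M,\, F_{\text{Bott}} \otimes \pi^* a,\, f\circ\pi)] - [(\widehat M,\, F_0 \otimes \pi^* a,\, f\circ\pi)].$$
The second term vanishes by the boundary argument, since $F_0 \otimes \pi^* E_i = \pi^*(F_{\text{Bott}}|_M \otimes E_i)$ is itself pulled back from $M$; the first term equals $[(M,a,f)]$ by the modification relation. Hence both compositions agree, and the diagram commutes.

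The one point that demands a careful check is the boundary argument, specifically that the $\Spin^c$-structure on $B$ (induced from $TB \cong p^*(TM \oplus \R \oplus h^*F)$ together with the canonical complex $\Spin^c$-structure on $\R \oplus h^*F$) restricts on $\partial B = \widehat M$, via the outward-normal-first convention, to the $\Spin^c$-structure used both in the modification and in the stable $(Z, E \oplus F)$-structure of $\beta^F$. This is a bookkeeping check using the compatible direct-sum decomposition $T\widehat M \oplus \nu \cong \pi^*(TM) \oplus \pi^*(\R \oplus h^*F)$ and the two-out-of-three principle for $\Spin^c$-structures, and is the main concrete calculation in the proof.
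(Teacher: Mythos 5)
Your argument is correct and is precisely the detailed unwinding that the paper's one-line proof (``This follows from the definitions of the Gysin homomorphism and vector bundle modification'') gestures at: you expand $\iota_!(a)=([F_{\mathrm{Bott}}]-[F_0])\cdot\pi^*a$, recover $[(M,a,f)]$ from the $F_{\mathrm{Bott}}\otimes\pi^*a$ summand via the modification relation, and kill the $F_0\otimes\pi^*a$ term by bounding the sphere bundle with the ball bundle. The $\Spin^c$-compatibility check you flag at the end is exactly the point worth spelling out, and your sketch of it via the two-out-of-three principle is the right one.
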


\begin{proof}
This follows from the definitions of the Gysin homomorphism and vector bundle modification.
\end{proof}

We obtain therefore a natural transformation
$$
k^G_j(X)\longrightarrow K^G_j(X)
$$
that fits into a commuting diagram 
$$
\xymatrix@C=50pt{
  k^{G}_j (X)\ar[r]\ar@{=}[d] &   K^{G}_j (X)\ar[d]^-{\mu}\\
 k^{G}_j(X) \ar[r]_-\mu&KK^{G}_j (C_0(X ),\C).}$$

\begin{lemma}
For every $j$ the map from $k^G_j(X)$ into the equivariant geometric $K$-homology group $K_j^G(X)$ is surjective.
\end{lemma}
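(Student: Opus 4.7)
The plan is to start with an arbitrary $K$-cycle $(M,E,f)$ for $K^G_j(X)$ and, using only the vector bundle modification relation in the definition of $K^G_j(X)$, replace it by a cycle that visibly arises from an element of some $\Omega^{(Z,E')}_{*}(X)$, and therefore from $k^G_j(X)$.

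First I would use Theorem~\ref{ebar-numerable-thm} to fix a $G$-equivariant map $h_0\colon M\to Z$ with $Z$ a $G$-finite subcomplex of $\underline E G$. Next I would exploit the $G$-$\Spin^c$-structure on $TM$ to produce a complex $G$-bundle $W$ on $M$ together with a real $G$-bundle isomorphism $\R^a\oplus TM \cong W_{\R}$ under which the $\Spin^c$-structure on $TM$ corresponds to the canonical one on the complex bundle $W$. This is the equivariant analogue of the classical statement that every $\Spin^c$-structure is stably complex: locally over each orbit $G/H$ (with $H$ finite) it reduces to the corresponding statement for $H$-representations, and a $G$-invariant partition of unity then assembles the local data. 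With $W$ in hand, the extension of Corollary~\ref{dir-summand-lemma} to the map $h_0$ noted right after its proof yields a complex $G$-bundle $E'$ on $Z$ and a complex $G$-bundle $V$ on $M$ with $h_0^* E' \cong W\oplus V$ as complex $G$-bundles.

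I would then apply the vector bundle modification relation of Definition~\ref{k-geom-def}(c) using the underlying real $\Spin^c$-bundle $V_{\R}$, which has even rank because $V$ is complex. The resulting cycle $(\widehat M, F\otimes \pi^*E, f\circ\pi)$ is equivalent to $(M,E,f)$ in $K^G_j(X)$. The standard identification $T\widehat M\oplus \R \cong \pi^*(TM\oplus \R \oplus V_{\R})$ for the tangent bundle of a sphere bundle, combined with the isomorphism $W_{\R}\oplus V_{\R}\cong h_0^*(E'_{\R})$ obtained above, produces a stable real $G$-bundle isomorphism
$$
\R^{a+1}\oplus T\widehat M \;\cong\; \R\oplus h^*(E'_{\R}), \qquad h:=h_0\circ\pi ,
$$
equipping $\widehat M$ with a stable $(Z,E')$-structure. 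By construction this induces the same $\Spin^c$-structure on $\widehat M$ as the vector bundle modification does, so the triple $(\widehat M, [F\otimes \pi^*E], f\circ\pi)$ is a cycle for $\Omega^{(Z,E')}_{\dim \widehat M}(X)$, and defines a class in $k^G_j(X)$ whose image in $K^G_j(X)$ is the original $[(M,E,f)]$.

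The step I expect to be the main obstacle is the construction of the complex $G$-bundle $W$ whose realification carries the given $\Spin^c$-structure on $\R^a\oplus TM$ on the nose, rather than merely after a further twist by a complex line bundle. This is the one place where the equivariant geometry of $M$ needs to be controlled beyond what is immediate from L\"uck--Oliver, and it is where the careful bookkeeping of $\Spin^c$-determinant line bundles and the two-out-of-three principle enters. Once this is settled, the remainder of the argument is a routine assembly of the results of Section~\ref{vb-sec} with the vector bundle modification relation and the tangent bundle formula for a sphere bundle.
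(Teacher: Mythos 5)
Your overall strategy is the same as the paper's: map $M$ to a $G$-finite subcomplex $Z$ of $\underline E G$, present $TM$ (stably) as a subbundle of a bundle pulled back from $Z$, and then do a vector bundle modification by the complementary bundle so that the resulting $\widehat M$ carries a stable $(Z,E')$-structure. Where you diverge is the insertion of the intermediate bundle $W$: you first try to realize the given $G$-$\Spin^c$-structure on $\R^a\oplus TM$ as the realification of a complex $G$-bundle $W$, and only then split $h_0^*E'\cong W\oplus V$ in the complex category. This is the step you flag as the main obstacle, and it is indeed a genuine gap: the ``classical statement that every $\Spin^c$-structure is stably complex'' is not a theorem in the generality you need (even non-equivariantly the fiber of $BU\to B\Spin^c$ is not contractible, so $\Spin^c$-structures need not lift stably to complex structures), and the equivariant version over a proper $G$-manifold is certainly not something that can be waved through via local reduction to $H$-representations and a partition of unity, since the $\Spin^c$- and complex structures being compared are global homotopical data, not sections of a sheaf.

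The paper's proof sidesteps this entirely and is strictly simpler. It embeds $TM$ (via its complexification and Corollary~\ref{dir-summand-lemma}) as a \emph{real} summand of $h^*E$ for some complex $G$-bundle $E$ on $Z$, stabilizes to $TM\oplus F_1\cong h^*E\oplus\R^s$ with $F_1$ of even rank, and then applies the two-out-of-three principle for $G$-$\Spin^c$-structures to equip the real bundle $F_1$ with the unique $\Spin^c$-structure compatible with the given one on $TM$ and the complex-induced one on $h^*E\oplus\R^s$. The modification step in Definition~\ref{k-geom-def}(c) only requires $V$ to be an even-rank $G$-$\Spin^c$-bundle, not the realification of a complex bundle, so $F_1$ works directly and no $W$ is needed. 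If you drop the construction of $W$ and instead apply two-out-of-three to the real complement $F_1$, your argument becomes the paper's and the gap disappears.
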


\begin{proof}
Let $(M,a,f)$ be an equivariant $K$-cycle for $X$.  The manifold $M$ maps
equivariantly to a $G$-finite subcomplex $Z$ of the universal space
$\underline E G$  {via $h$}.

 The tangent bundle for $M$ (indeed its complexification) embeds as a summand
of a    $G$-bundle that is pulled back from a standard $G$-bundle on  $Z$ via  $h\colon M\to Z$  (see Corollary~\ref{dir-summand-lemma} and the comment following it). Thus there is an isomorphism of real bundles
$$
TM\oplus F_0 \cong  h^* E
$$
where {$F_0$} is a real $G$-bundle on $M$ and $E$ is a complex $G$-bundle on $Z$.  By adding a trivial bundle if necessary, we obtain an isomorphism
$$
TM\oplus F_1 \cong  h^* E \oplus \R^s,
$$
where $F_1=F_0\oplus \R^s$   has \emph{even} fiber dimension.  By the two out of three principle for $\Spin^c$-structures from Section~\ref{geometric-sec},  the bundle $F_1$ carries a $\Spin^c$-structure whose direct sum with the given $\Spin^c$-structure on $TM$ is the direct sum of the $\Spin^c$-structure on $h^*E$ associated with its complex structure and the trivial $\Spin^c$-structure on $\R$.  If we carry out a  vector bundle modification using $F_1$, then we obtain an  equivariant $K$-cycle $(M, E, f)\hat{\phantom{tt}}$ that is equivalent to $(M,E,f)$ and for which $\widehat M$ carries a stable $(Z,E)$-structure compatible with its $\Spin^c$-structure.
\end{proof}

With this, as we pointed out in the introduction, the proof of Theorem~\ref{main-theorem} is complete.

\section{The Baum-Connes Assembly Map}

Let $G$ be a countable discrete group.  The essence of the Baum-Connes
conjecture for $G$ is the assertion  that every class in the $K$-theory of the
reduced group $C^*$-algebra $C^*_r(G)$ arises as the index of an elliptic
operator on a $G$-compact proper $G$-manifold, and that in addition the only
relations among these indexes arise from geometric relations (such as for
example bordism) between the operators.   The conjecture arose from a
$K$-theoretic  analysis of Lie groups and of  crossed product algebras related
to foliations.   However here we shall discuss only the   $C^*$-algebras of
discrete groups.

To make their conjecture precise, Baum and Connes constructed in \cite{MR1769535} geometric groups $K^j(G)$ from cycles related to the symbols of equivariant elliptic pseudodifferential operators, and an equivalence relation related to the Gysin map in $K$-theory.  They then defined an index map 
$$\mu \colon K^j(G)\longrightarrow K_j(C^*_r (G))
$$
that they conjectured to be an isomorphism.

Although the Dirac operator on a $\Spin^c$-manifold did not play a central role in \cite{MR1769535}, it is nonetheless a fairly routine matter to identify the geometric group  defined there with the group generated from cycles $(M,E)$, where:
\begin{enumerate}[\rm (a)]
\item $M$ is a $G$-compact proper $G$-$\Spin^c$-manifold, all of whose components have either even or odd dimension, according as $j$ is $0$ or $1$, and 
\item $E$ is a complex $G$-bundle on $M$.
\end{enumerate}
The equivalence relation between cycles is generated by bordism, direct sum/dis\-joint union and vector bundle modification, exactly as in Section~\ref{geometric-sec}, except that here there is no reference space $X$, nor any map from $M$ to $X$.  Compare \cite{MR2106816}, where the conjecture for countable discrete groups is formulated in precisely this way.

It follows from the universal property of the space $\underline E G$ that there is an isomorphism
$$K^j(G) \cong  \varinjlim _{X\subseteq \underline E G} K_j^G(X),$$
where on the right is the direct limit of the geometric $K$-homology groups of the $G$-finite subcomplexes of the $G$-$CW$-complex $\underline E G$.  

In a later paper \cite{MR1292018}, Baum, Connes and Higson defined an assembly map 
$$
\mu\colon \varinjlim _{X\subseteq \underline E G} KK_j^G(C(X),\C) \longrightarrow K_j(C^*_r (G))
$$
Its relation to the original Baum-Connes map is summarized by the commutative
diagram 
$$
\xymatrix{
K^j(G)\ar[r]^-{\cong}\ar[d]_{\mu}&c\varinjlim _{X\subseteq \underline E G}  K_j^G(X)  \ar[r]^-{\mu}& \varinjlim _{X\subseteq \underline E G}KK_j^G(C(X),\C)\ar[d]^{\mu} \\
  K_j(C^*_r (G))\ar[rr]_{=}&&   K_j(C^*_r (G)) ,
}
$$
where the horizontal map labelled (yet again) $\mu$ is the one analyzed in this paper, and shown to be an isomorphism.   Because it is an isomorphism, the reformulation of the Baum-Connes conjecture in \cite{MR1292018} is equivalent to the original in \cite{MR1769535} for discrete groups.

Despite the discovery some years ago of   counterexamples to various extensions of the Baum-Connes conjecture (see \cite{MR1911663}), there is, as of today, no known counterexample to the conjecture as reviewed here.

\bibliography{BaumHigsonSchick_References}	

\noindent P.B.:  Department of Mathematics,  Penn State University, University Park, PA 16802. Email: {\tt baum@math.psu.edu}.

\noindent N.H.:  Department of Mathematics,  Penn State University, University Park, PA 16802. Email: {\tt higson@math.psu.edu}.

\noindent T.S.:  Mathematisches Institut, Universit\"at G\"ottingen,
Bunsenstr.\ 3, 
D-37073 G\"ottingen, Germany. Email:  {\tt thomas.schick@uni-math.gwdg.de}.

\end{document}